\theoremstyle{plain}
\newtheorem{theorem}{Theorem}[section]
\newtheorem{proposition}[theorem]{Proposition}
\newtheorem{lemma}[theorem]{Lemma}
\newtheorem{corollary}[theorem]{Corollary}
\newtheorem{conjecture}[theorem]{Conjecture}
\theoremstyle{definition}
\newcommand{\appsection}[1]{\let\oldthesection\thesection
\renewcommand{\thesection}{Appendix \oldthesection}
\section{#1}\let\thesection\oldthesection}
\newtheorem{definition}[theorem]{Definition}
\newtheorem{observation}[theorem]{Observation}
\theoremstyle{remark}
\newtheorem{remark}[theorem]{Remark}
\def\D{{\mathbb{D}}}
\def\Z{{\mathbb{Z}}}
\def\F{{\mathbb{F}}}
\def\Q{{\mathbb{Q}}}
\DeclareMathOperator{\rank}{rank}
\DeclareMathOperator{\Sym}{Sym}
\date{\today.\\
\indent \textit{2010 Mathematics Subject Classification.} Primary 11G05; Secondary 11G18, 11G40.\\
\indent \textit{Key words and phrases.} Watkins's conjecture, elliptic curves, modular degree, Mordell-Weil rank.\\
\indent This research was supported by ANID Doctorado Nacional 21190304.}
\begin{document}
\title[Watkins's conjecture and Quadratic Twists]{Watkins's conjecture for quadratic twists of Elliptic Curves with Prime Power Conductor}

\author[Jerson Caro]{Jerson Caro}
\email{jlcaro@bu.edu}
\address{Department of Mathematics \& Statistics, Boston University, 665 Commonwealth Avenue, Boston, MA 02215, USA}

\maketitle

{\centering\footnotesize \textit{Dedicated to the memory of my father}.\par}
\begin{abstract}
Watkins's conjecture asserts that the rank of an elliptic curve is upper bounded by the $2$-adic valuation of its modular degree. We show that this conjecture is satisfied when $E$ is any quadratic twist of an elliptic curve with a rational point of order $2$ and prime power conductor, in particular, for the congruent number elliptic curves. Furthermore, we give a lower bound for the congruence number for elliptic curves of the form $y^2=x^3-dx$, with $d$ a fourth power free integer.
\end{abstract}

\section{Introduction} \label{intro}
\noindent 
Let $E$ be an elliptic curve defined over $\Q$. The modularity theorem \cite{wiles1995modular, taylor1995ring, breuil2001modularity} ensures the existence of a non-constant morphism $\phi: X_0 (N) \to E$ defined over $\Q$. Denote by $\phi_E$ the morphism, up to sign, which has minimal degree and which sends the cusp $i\infty$ to the neutral point of $E$.\textit{ The modular degree} $m_E$ of $E$ is the degree of $\phi_E$.
There are many relevant conjectures in number theory about this invariant. One of them, equivalent to ABC conjecture \cite{frey1989links}, is to give polynomial bounds of its size in terms of the conductor. The following conjecture is the main topic of this paper.
\begin{conjecture}[Watkins \cite{watkins2002computing}]
For every elliptic curve $E$ over $\Q$ we have $r \leq \nu_2(m_E)$, where $\nu_p$ denotes the $p$-adic valuation and $r\coloneqq\rank_\Z (E(\Q))$.
\end{conjecture}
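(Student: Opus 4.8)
I will prove the conjecture in the case announced in the abstract, so let $E=E_0^{(d)}$ be a quadratic twist of an elliptic curve $E_0/\Q$ with $E_0(\Q)[2]\neq 0$ and $\mathrm{cond}(E_0)$ a prime power $p_0^{k}$; write $r=\rank_\Z E(\Q)$ and let $\omega(d)$ denote the number of prime divisors of $d$. Since the $2$-division field is invariant under quadratic twisting, $E(\Q)[2]\neq 0$ as well, so $E$ carries a rational $2$-isogeny $\phi\colon E\to E'$ with dual $\hat\phi$, and because the congruence number $n_E$ divides the modular degree $m_E$ it suffices to prove $r\le\nu_2(n_E)$. The plan is to bound $r$ from above and $\nu_2(n_E)$ from below by the \emph{same} quantity, a linear function of $\omega(d)$. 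For the upper bound, put $E\colon y^{2}=x(x^{2}+ax+b)$ with $a,b\in\Z$; classical $2$-isogeny descent gives the exact formula
\[
r=\dim_{\F_2}\frac{E(\Q)}{\hat\phi E'(\Q)}+\dim_{\F_2}\frac{E'(\Q)}{\phi E(\Q)}-2\ \le\ \dim_{\F_2}\mathrm{Sel}^{(\phi)}(E)+\dim_{\F_2}\mathrm{Sel}^{(\hat\phi)}(E')-2,
\]
and each Selmer group embeds into $\Q(S,2)$ with $S$ the set of primes dividing $2b(a^{2}-4b)$, i.e. the bad primes of $E$ together with $2$. As $E_0$ has conductor $p_0^{k}$, twisting by $d$ adjoins only the primes dividing $d$, so $S\subseteq\{2,p_0\}\cup\{q:q\mid d\}$; tracking the local images at $2$, $p_0$ and $\infty$ (case by case over the finite list of admissible $E_0$) yields $r\le 2\,\omega(d)+c_0$ with $c_0=c_0(E_0)$.

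The heart of the matter is the complementary lower bound $\nu_2\!\big(n_{E_0^{(d)}}\big)\ge 2\,\omega(d)+c_0$, ideally in the sharp form $\nu_2(n_E)\ge\dim_{\F_2}\mathrm{Sel}^{(\phi)}(E)+\dim_{\F_2}\mathrm{Sel}^{(\hat\phi)}(E')-2$. The mechanism is that the rational $2$-torsion makes mod-$2$ congruences unavoidable: for every prime $q\mid d$ with $q\nmid 2\,\mathrm{cond}(E_0)$ the curve $E_0^{(d)}$ acquires additive reduction of Kodaira type $I_0^{*}$ at $q$ with Tamagawa number divisible by $2$, and $a_q\equiv q+1\pmod 2$, so Ribet-style level raising modulo $2$ applies at $q$; comparing the newform of $E_0^{(d)}$ against the $q$-old subspace at level $\mathrm{cond}(E_0^{(d)})$ and iterating over the primes dividing $d$ forces $\nu_2(n_{E_0^{(d)}})$ to increase, at each stage, by at least what that prime contributes to the two Selmer groups. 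For the base curve $E_0\colon y^{2}=x^{3}-x$ the twist $E_0^{(m)}\colon y^{2}=x^{3}-m^{2}x$ is the case $d=m^{2}$ of the family $y^{2}=x^{3}-dx$, and there the estimate is made unconditional and explicit by exploiting the complex multiplication by $\Z[i]$: the Petersson norm $\langle f,f\rangle$, hence $n_E$, is recovered from $L(\mathrm{Sym}^{2} f,2)$, which factors as a product of $L$-functions of Hecke characters, and from a period equal to a fixed transcendental constant times an explicit algebraic number whose $2$-adic valuation is computed directly. This is exactly the congruence-number lower bound advertised in the abstract, valid for every biquadratefree $d$; the remaining finitely many base curves $E_0$ (of conductor $17$, $32$, $\dots$) are handled by the same level-raising computation.

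Putting the two halves together gives $r\le 2\,\omega(d)+c_0\le\nu_2(n_E)\le\nu_2(m_E)$. The principal obstacle is the second half, and within it the requirement that the two inequalities be \emph{sharp relative to one another}: one must verify, uniformly over the finite list of admissible $E_0$ and over all $d$, that each bad prime contributes \emph{at most} its share to the pair of Selmer groups while contributing \emph{at least} the same share to $\nu_2(n_E)$, with the boundary primes dividing $2\,\mathrm{cond}(E_0)$, the exact size of $E'(\Q)[2]$, and the images of the local conditions all tracked on the nose, so that the two independently computed quantities coincide rather than differing by an unbounded amount. A secondary difficulty is showing that the level-raising contributions at distinct primes are genuinely independent, so that their $2$-adic valuations add; this will require a multiplicity-one input for the relevant residual mod-$2$ representations.
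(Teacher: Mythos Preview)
Your argument rests on a fatal misstatement of the relationship between the modular degree and the congruence number. You write that ``the congruence number $n_E$ divides the modular degree $m_E$'' and conclude with the chain $r\le\nu_2(n_E)\le\nu_2(m_E)$. The divisibility goes the other way: for an $X_0(N)$-optimal curve one has $m_E\mid\delta_E$ (Cojocaru--Kani; Agashe--Ribet--Stein), so $\nu_2(m_E)\le\nu_2(\delta_E)$, and a lower bound on the $2$-adic valuation of the congruence number says nothing about $\nu_2(m_E)$. The paper itself proves a congruence-number lower bound for the family $y^2=x^3-dx$ (its Theorem~1.3) and is explicit that this only yields the weaker statement $r\le\nu_2(\delta_E)$, \emph{not} Watkins's conjecture; see Corollary~1.4 and the remark preceding it. Your level-raising sketch, even if it could be made rigorous (and level raising mod~$2$ is delicate precisely because the residual representation coming from a rational $2$-torsion point is reducible), is aimed at the wrong invariant.

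The paper's proof of the main theorem works directly with $m_E$. The key input is an identity of Esparza-Lozano--Pasten (Lemma~2.2 here) expressing $m_{E^{(D)}}/c_{E^{(D)}}^2$ as $m_E/c_E^2$ times the ratio of Petersson norms $\|f_{E^{(D)}}\|^2_{N^{(D)}}/\|f_E\|^2_N$ times $|\Delta_{E^{(D)}}/\Delta_E|^{1/6}$. The Petersson-norm ratio is then controlled by Delaunay's explicit formula: each prime $q\mid D$ with $q\nmid N$ contributes a factor $(q-1)(q+1-a_q)(q+1+a_q)$, and since $E(\Q)[2]\neq 0$ reduces injectively into $E(\F_q)$ one has $2\mid q+1-a_q$, so each such prime contributes at least $3$ to the $2$-adic valuation. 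Combined with your rank bound $r\le 2\omega(D)+1$ from $2$-isogeny descent, and with a case-by-case computation of $\nu_2(m_{E_0}/c_{E_0}^2)$ and of the discriminant term over the finite list of base curves $E_0$, this gives $\nu_2(m_{E^{(D)}})\ge r$ directly. No passage through the congruence number is needed or possible.
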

One particular case of this conjecture is to prove that when $m_E$ is odd, then $\rank_\Z(E(\Q))=0$. In this direction, there is great progress. For example, Kazalicki and Kohen \cite{kazalicki2018special},\cite{kazalicki2019corrigendum} proved that if the congruence number $\delta_E$ of $E$ (which is a multiple of the modular degree $m_E$) is odd, then $\rank_\Z(E(\Q))=0$. From this, it may be deduced (see Proposition \ref{wconj prime}) that Watkins's conjecture holds for elliptic curves with conductor a prime power and nontrivial rational $2$-torsion, (cf. Section \ref{sec 4}). 

When the elliptic curves has a rational point of order $2$, using Selmer groups an upper bound for $r$ can be given in terms of $\omega(N)$. Here, $N$ is the conductor of $E$, and $\omega(N)$ is the number of distinct prime factors of $N$ (cf. Section \ref{Mordell-Weil Rank}). This upper bound allowed Esparza-Lozano \& Pasten \cite{esparza2021conjecture} to prove that Watkins's conjecture holds for a quadratic twist $E^{(D)}$ by $D$ with $\omega(D)$ large enough, whenever $E$ has a rational point of order $2$. 

A natural question emerges: When do all the quadratic twists of an elliptic curve $E$ satisfy Watkins's conjecture? In this paper we prove that this conjecture holds for any quadratic twist of an elliptic curve with a rational point of order $2$ and prime power conductor.
\begin{theorem}\label{main theorem}
Let $E$ be an elliptic curve with a rational point of order $2$. Assume that $E$ is a quadratic twist of an elliptic curve with prime power conductor. Then $E$ satisfies Watkins's conjecture.
\end{theorem}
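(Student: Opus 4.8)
The plan is to pit an upper bound for the Mordell--Weil rank, obtained by descent through the rational $2$-isogeny provided by the $2$-torsion point, against a lower bound for $\nu_2(m_E)$, where the latter is controlled by the arithmetic of the twisting primes (again using the rational $2$-torsion). First, since a quadratic twist of a curve with a rational point of order $2$ again has one, I may write $E\cong E_0^{(D)}$ with $E_0$ of conductor a prime power $q^k$, $E_0$ having rational $2$-torsion, and $D$ squarefree. If the squarefree part of $D$ is $\pm 1$ or $\pm q^{*}$ then $E$ itself has prime power conductor and the desired inequality $r\le\nu_2(m_E)$ is exactly the case settled by Kazalicki--Kohen (cf. Section \ref{sec 4}); so I assume $D$ is divisible by some prime $\ell\ne q$. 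Writing $\omega'(D)$ for the number of prime divisors of $D$ other than $q$ and $2$, the conductor $N$ of $E$ is supported on $q$, $2$ and those primes, so $\omega(N)\le\omega'(D)+2$.

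Next I would bound $r\coloneqq\rank_\Z(E(\Q))$ from above. The rational $2$-torsion point yields a $2$-isogeny $\phi\colon E\to E'$ with dual $\hat\phi$, and $r$ is at most the sum of the $\F_2$-dimensions of the $\phi$- and $\hat\phi$-Selmer groups of $E$ and $E'$. Each of these Selmer groups injects into a product of local factors indexed by the finite set $S=\{\ell:\ell\mid N\}\cup\{2,\infty\}$, and after accounting for the local conditions at $q$, $2$ and $\infty$ one obtains a bound of the shape $r\le\omega'(D)+c$ for an explicit small constant $c$ (cf. Section \ref{Mordell-Weil Rank}).

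The heart of the argument is a matching lower bound $\nu_2(m_E)\ge\omega'(D)+c'$. Because $E_0$ has a rational point of order $2$, $\#E_0(\F_\ell)$ is even at every prime $\ell$ of good reduction, hence $a_\ell(E_0)\equiv\ell+1\pmod 2$; in particular $a_\ell(E_0)$ is even for every odd $\ell\nmid N_{E_0}$. I would exploit this through a comparison of the congruence number $\delta$ of $E_0^{(D)}$ with that of $E_0$: twisting by an odd prime $\ell$ of good reduction produces a prime of additive reduction with $\ell^2\,\|\,N$, and, analysing the congruences satisfied by $f_{E_0}\otimes\chi_\ell$ together with the parity of $a_\ell(E_0)$, each such $\ell$ forces an extra factor of $2$ into $\delta$. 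Iterating over the odd prime divisors of $D$ gives $\nu_2(\delta_{E_0^{(D)}})\ge\omega'(D)+\nu_2(\delta_{E_0})-O(1)$, and one then transfers this to $m_E$ using that $\delta_E/m_E$ is divisible only by primes dividing $N$ (Agashe--Ribet--Stein), the sole delicate point being a possibly large power of $2$ dividing $N$, which affects only finitely many twist classes. Combining with the rank bound of the previous paragraph yields $r\le\omega'(D)+c\le\nu_2(m_E)$.

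I expect the third step to be the main obstacle, for two reasons: one must pin down precisely the gain in $\nu_2(\delta)$ per twisting prime so that the constant $c'$ dominates the constant $c$ coming from the descent bound for \emph{every} admissible $D$ --- not merely for $\omega(D)$ large, as in Esparza-Lozano \& Pasten --- and one must control the $2$-part of $\delta_E/m_E$ when $4\mid N$, where the Agashe--Ribet--Stein comparison is weakest. I anticipate that the finitely many remaining configurations (base curves $E_0$ of small conductor that is a power of $2$, and twists with $\omega'(D)$ very small) will need an explicit, case-by-case verification, carried out via a concrete divisibility statement for congruence numbers in the spirit of the bound announced for $y^2=x^3-dx$ in the abstract.
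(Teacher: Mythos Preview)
Your overall architecture (reduce to a minimal base curve of prime power conductor, handle that base case via Kazalicki--Kohen, then compare a Selmer rank bound against a lower bound for $\nu_2(m_{E^{(D)}})$) matches the paper, but the two key numerical inputs you propose are both off, and the mismatch is fatal rather than cosmetic.

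First, the descent bound. A single rational $2$-torsion point gives a $2$-isogeny $\phi$, and summing the $\phi$- and $\hat\phi$-Selmer dimensions yields $r\le 2\omega(N)-1$, i.e.\ essentially $r\le 2\omega'(D)+c$, not $r\le \omega'(D)+c$. (This is exactly inequality~\eqref{rank}, and the sharper form \eqref{rank32} still has $\omega(B)+\omega(A^2-4B)\sim 2\omega(D)$ after twisting.) So the slope of your rank bound in $\omega'(D)$ is $2$, not $1$.

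Second, your proposed lower bound for $\nu_2$. Gaining ``an extra factor of $2$'' per twisting prime, as you suggest via congruences $f_{E_0}\otimes\chi_\ell\equiv f_{E_0}$, gives $\nu_2(\delta_{E^{(D)}})\gtrsim \omega'(D)$; even granting a clean transfer from $\delta$ to $m$, this slope $1$ cannot dominate the slope~$2$ rank bound, so the argument fails for all $D$ with $\omega'(D)$ large, not just for finitely many small cases. The paper avoids this by \emph{not} going through $\delta$ at all: it uses the exact formula~\eqref{twist-modular degree} relating $m_{E^{(D)}}$ to $m_{E_0}$ via $\|f_{E^{(D)}}\|^2/\|f_{E_0}\|^2$, and then Delaunay's theorem expresses this Petersson ratio as a product over $q\mid D$ of factors $V(q)=(q-1)(q+1-a_q)(q+1+a_q)$. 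The point you are missing is that the rational $2$-torsion makes $q+1\equiv a_q\pmod 2$, so \emph{all three} factors of $V(q)$ are even and $\nu_2(V(q))\ge 3$. This gives $\nu_2(m_{E^{(D)}})\ge 3\omega(D)+O(1)$, and the slope $3$ beats the slope $2$ of the rank bound; only the finitely many cases $\omega(D)\le 1$ (and $D=2$) then need individual attention, which the paper handles with Corollary~\ref{odd discriminant}, Lemma~\ref{even discriminant} and Remark~\ref{D=1(mod 4)}.
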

To prove that if $\rank_\Z(E(\Q))>0$ then $2\mid m_E$, because of Theorem 1 in \cite{calegari2009elliptic}, the only missing case is when $E$ has conductor $N$ divisible by at most two odd primes, additive reduction at $2$ and a rational point of order $2$. Thus, the previous Theorem covers many of those cases. 

Another application of this theorem is the following: An integer $D$ is a \textit{congruent number} if there exists a right triangle such that its sides are rational and its area equals $D$. An important problem in number theory is to know when an integer number $D$ is a congruent number, which is equivalent to know if the elliptic curve $y^2=x^3-D^2x$ has positive Mordell-Weil rank $r$, see \cite{koblitz2012introduction}. A crucial observation is that for an integer $D$, $E^{(D)}:y^2=x^3-D^2x$ is the quadratic twist by $D$ of the elliptic curve $E: y^2=x^3-x$, which has conductor $32$. Applying  \cite[Theorem 1.2]{esparza2021conjecture}, we have that for a positive squarefree integer $D$, with $\omega(D)\geq 12$, Watkins's conjecture holds for $E^{(D)}$. Thanks to Theorem \ref{main theorem}, we show this conjecture unconditionally. More precisely:
\begin{corollary}
Watkins's conjecture holds for all congruent curves $E^{(D)}:y^2=x^3-D^2x$.
\end{corollary}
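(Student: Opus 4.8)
The plan is to obtain the Corollary as an immediate consequence of Theorem \ref{main theorem}, so the only task is to check that each curve $E^{(D)}$ meets the two hypotheses of that theorem. As observed in the introduction, for a positive integer $D$ the curve $E^{(D)}\colon y^2=x^3-D^2x$ is the quadratic twist by $D$ of $E\colon y^2=x^3-x$. Since $E$ has conductor $32=2^5$, it is an elliptic curve with prime power conductor; hence $E^{(D)}$ is a quadratic twist of an elliptic curve with prime power conductor, which is the second hypothesis of Theorem \ref{main theorem}.

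For the first hypothesis, note that $x^3-D^2x=x(x-D)(x+D)$ factors completely over $\Q$, so $E^{(D)}$ has full rational $2$-torsion: the points $(0,0),(D,0),(-D,0)\in E^{(D)}(\Q)$ are the three elements of order $2$. (Alternatively, $E$ itself has full rational $2$-torsion, and a quadratic twist of $E$ by $D\in\Q^\times$ has the same $2$-division field, since the $x$-coordinates of the $2$-torsion points, together with the condition $y=0$ cutting them out, are unaffected by the twist.) With both hypotheses verified, Theorem \ref{main theorem} applies to $E^{(D)}$ and yields Watkins' conjecture for it.

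To see that this covers \emph{every} congruent curve, recall that the quadratic twist $E^{(D)}$ depends only on the class of $D$ in $\Q^\times/(\Q^\times)^2$; concretely, writing $D=f^2D_0$ with $D_0$ the squarefree part of $D$, the substitution $(x,y)\mapsto(f^2x,f^3y)$ gives a $\Q$-isomorphism $E^{(D)}\cong E^{(D_0)}$, and the Mordell--Weil rank and the modular degree are isomorphism invariants. There is no genuine obstacle in this argument: all the difficulty resides in Theorem \ref{main theorem}, and the role of the Corollary is simply to record that congruent curves are quadratic twists of the prime-power-conductor curve $y^2=x^3-x$ and retain rational $2$-torsion.
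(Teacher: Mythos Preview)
Your proof is correct and follows exactly the paper's approach: the paper does not give a separate proof of this corollary, but in the paragraph preceding it observes that $E^{(D)}$ is the quadratic twist by $D$ of $E:y^2=x^3-x$, which has conductor $32$, so Theorem~\ref{main theorem} applies immediately. Your verification of the rational $2$-torsion hypothesis and the remark on the squarefree part of $D$ just make explicit what the paper leaves implicit.
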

Since the elliptic curve $y^2=x^3-x$ has complex multiplication by $\Z[i]$ we may consider its quartic twists. However, the process we use to prove Theorem \ref{main theorem} seems not useful for quartic twists, since we need to find a lower bound of the $2$-adic valuation of an infinite product. Although we cannot give an applicable lower bound for $\nu_2(m_E)$, we found an alternative process that provides us a lower bound of $\nu_2(\delta_E)$.
\begin{theorem}\label{2main theorem}
Let $d$ be an odd squarefree integer and $D$ any divisor of $d$. For the elliptic curve $E:y^2=x^3-dD^2x$ we have that  
\[
2\left\lfloor\frac{\omega(d)+1}{2}\right\rfloor+1\leq \nu_2(\delta_E).
\]
\end{theorem}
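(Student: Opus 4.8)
The plan is to analyze the congruence number $\delta_E$ via its relation with the modular degree and, more usefully, via the structure of the newform modulo $2$. The congruence number $\delta_E$ measures the largest $n$ such that the newform $f_E$ is congruent modulo $n$ to another cusp form in $S_2(\Gamma_0(N))$ orthogonal to $f_E$. For an elliptic curve $E$ in a family of quadratic twists, the key is that quadratic twists by divisors of a fixed $d$ produce many newforms of related conductor, and the interplay of their mod-$2$ reductions forces congruences. First I would compute the conductor $N$ of $E:y^2 = x^3 - dD^2 x$ (which is essentially $32$ times an appropriate product of the odd primes dividing $d$, with some case distinctions depending on residues mod $4$), and identify the relevant space of modular forms where $f_E$ lives.

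**Next I would** exploit the CM structure: $E$ is a quadratic twist of $y^2 = x^3 - x$, which has CM by $\Z[i]$, so $f_E$ is a CM newform attached to a Hecke character of $\Q(i)$. The Fourier coefficients $a_p$ vanish for $p$ inert in $\Q(i)$ (i.e. $p \equiv 3 \bmod 4$) and are determined by splitting primes. The mod-$2$ behaviour of such coefficients is highly constrained. I would then bring in the $2^{\omega(d)}$ quadratic twists $E^{(D')}$ for $D' \mid d$ (or a suitable subset of them) — these all have conductor dividing a common level $M$, and their associated newforms are pairwise distinct but become mutually congruent modulo $2$ because their $a_p$ at each splitting prime agree up to a sign $\left(\frac{D'}{p}\right)$, which is $\pm 1 \equiv 1 \bmod 2$. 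The number of such twists sharing a congruence class mod $2$ gives, via the standard lower bound $\nu_2(\delta_E) \geq$ (multiplicity of the mod-$2$ system minus one, roughly), the claimed quantity. The floor expression $2\lfloor (\omega(d)+1)/2\rfloor + 1$ strongly suggests pairing up the $\omega(d)$ odd primes and counting dimensions of an eigenspace for a group of order $2^{\lfloor(\omega(d)+1)/2\rfloor}$ or so acting by Atkin–Lehner / twisting operators, with the extra $+1$ coming from a base congruence already present for $y^2 = x^3 - x$ itself (whose modular degree/congruence number at level $32$ is even).

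**The main obstacle** I anticipate is making the congruence-counting rigorous: passing from "these newforms all reduce to the same mod-$2$ Galois representation / eigensystem" to an actual lower bound on $\nu_2(\delta_E)$ requires controlling the $2$-adic valuation of the index of the lattice spanned by these forms inside the full cusp-form lattice, i.e. understanding the "mod $2^k$" congruences, not just mod $2$. One clean route is to use that $\delta_E$ is divisible by $m_E$ together with a direct study of the theta-series realization of these CM forms: writing each twisted newform as a linear combination of binary theta series attached to ideal classes of orders in $\Z[i]$, one can compute the Gram matrix of these theta series and read off the $2$-adic valuation of its relevant minor. The parity bookkeeping in the floor function — why one gains a full factor of $2$ for every \emph{two} new primes rather than for every prime — will require care, and I expect it reflects that twisting by a single prime $p \equiv 1 \bmod 4$ versus $p \equiv 3 \bmod 4$ contributes asymmetrically, so only pairs reliably contribute. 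I would handle the odd-versus-additive reduction at $2$ uniformly by keeping track of the level at $2$ (which is $2^5$ throughout for these curves) so that all the forms genuinely live in one space $S_2(\Gamma_0(32 \cdot \prod_{p \mid d} p))$.
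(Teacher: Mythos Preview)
Your starting point is right: the $2^{\omega(d)}$ quadratic twists $f^{(D')}$, $D'\mid d$, all live at the same level (Hadano's tables give $N^{(D')}=N$ for every $D'\mid d$) and are pairwise congruent modulo $2$ since their $a_q$ differ only by Legendre symbols. But from there the plan has a real gap.

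The obstacle you flag---turning ``many forms congruent mod $2$'' into a bound on $\nu_2(\delta_E)$---is exactly where the proof lives, and neither of your proposed routes (a multiplicity-minus-one heuristic, or a theta-series Gram-matrix computation) supplies the mechanism. What the paper does instead is entirely explicit: form the single alternating sum
\[
g=\sum_{D'\mid d}(-1)^{\omega(D')}f^{(D')}.
\]
A character-orthogonality computation shows that $a_n(g)=0$ unless $\gamma_{n_2}(p)=-1$ for \emph{every} $p\mid d$, in which case $a_n(g)=2^{\omega(d)}a_n(f)$. That already gives $2^{\omega(d)}\mid g$, hence $\nu_2(\delta_E)\ge\omega(d)$. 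The extra $+1$ (and the floor) do \emph{not} come from a base congruence at level $32$, nor from pairing primes by residue class mod $4$. They come from a separate lemma exploiting that $E$ is a \emph{quartic} twist of $y^2=x^3-x$: for odd $m$ and $q\nmid d$, one has $2\mid a_{q^m}(f)$ always, and $4\mid a_{q^m}(f)$ when $\bigl(\tfrac{d}{q}\bigr)=-1$. In the surviving case $\gamma_{n_2}(p)=-1$ for all $p\mid d$, some prime power $q^{\alpha}\|n_2$ has $\alpha$ odd, so $2\mid a_n(f)$; and when $\omega(d)$ is odd, multiplying the conditions forces $\bigl(\tfrac{d}{q}\bigr)=-1$ for some such $q$, giving $4\mid a_n(f)$. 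Thus the final exponent is $\omega(d)+1$ if $\omega(d)$ is even and $\omega(d)+2$ if $\omega(d)$ is odd, which is exactly $2\lfloor(\omega(d)+1)/2\rfloor+1$.

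So the two ingredients you are missing are (i) the alternating-sum construction that converts $2^{\omega(d)}$ mutually mod-$2$-congruent newforms into a single form divisible by $2^{\omega(d)}$, and (ii) the quartic-twist divisibility lemma that supplies the residual factor of $2$ or $4$. Your diagnosis of the floor as arising from residue classes of the $p_i$ modulo $4$ is off target; it is purely a parity-of-$\omega(d)$ phenomenon.
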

This theorem allows us to prove that for some elliptic curves $\rank_\Z (E(\Q))\leq \nu_2(\delta_E)$. This result goes into the direction of Watkins's conjecture since $m_E\mid \delta_E$ as we mentioned before.
\begin{corollary}\label{corollary 2main}
Let $p$ be an odd prime. Then for $E$ an elliptic curve $y^2=x^3-px$ or $y^2=x^3-p^3x$, we have that $\rank_\Z (E(\Q)) <\nu_2(\delta_E)$.
\end{corollary}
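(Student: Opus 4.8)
The plan is to pit the lower bound for $\nu_2(\delta_E)$ coming from Theorem~\ref{2main theorem} against a standard upper bound for $\rank_\Z(E(\Q))$ obtained by descent along a rational $2$-isogeny. Both curves are covered by Theorem~\ref{2main theorem}: taking $d=p$ (odd and squarefree) and $D=1$ produces $E\colon y^2=x^3-px$, while $d=p$ and $D=p$ produces $E\colon y^2=x^3-p^3x$ since then $dD^2=p^3$. As $\omega(p)=1$, Theorem~\ref{2main theorem} yields in both cases
\[
\nu_2(\delta_E)\ \ge\ 2\left\lfloor\frac{\omega(p)+1}{2}\right\rfloor+1\ =\ 3 .
\]
Hence it suffices to establish $\rank_\Z(E(\Q))\le 2$ for $E\colon y^2=x^3-cx$ with $c\in\{p,p^3\}$.

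For this I would carry out the classical descent via the $2$-isogeny $\varphi\colon E\to\bar E$ with kernel $\{O,(0,0)\}$, where $\bar E\colon y^2=x^3+4cx=x(x^2+4c)$ (see, e.g., \cite{koblitz2012introduction}). Since $-c$ and $4c$ are non-squares, both $E(\Q)[2]$ and $\bar E(\Q)[2]$ have order $2$, and the associated descent homomorphisms $\alpha\colon E(\Q)\to\Q^{\times}/(\Q^{\times})^2$ and $\bar\alpha\colon \bar E(\Q)\to\Q^{\times}/(\Q^{\times})^2$ satisfy
\[
\rank_\Z(E(\Q))\ =\ \dim_{\F_2}\operatorname{Im}\alpha+\dim_{\F_2}\operatorname{Im}\bar\alpha-2 .
\]
The image of $\alpha$ is contained in the subgroup of $\Q^{\times}/(\Q^{\times})^2$ generated by $-1$ and the primes dividing $c$; since $p$ is the only such prime, $\operatorname{Im}\alpha\subseteq\langle -1,p\rangle$ has $\F_2$-dimension at most $2$. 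For $\bar\alpha$ the decisive point is positivity: $x^2+4c>0$ for all real $x$, so every affine point $(x,y)\in\bar E(\Q)$ has $x>0$; consequently $\operatorname{Im}\bar\alpha$ lies in the subgroup generated by the (positive) primes dividing $4c$, i.e.\ $\operatorname{Im}\bar\alpha\subseteq\langle 2,p\rangle$, again of $\F_2$-dimension at most $2$. Plugging in, $\rank_\Z(E(\Q))\le 2+2-2=2$.

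Combining the two halves gives $\rank_\Z(E(\Q))\le 2<3\le\nu_2(\delta_E)$, as claimed. The only step that requires genuine care is the bookkeeping in the $2$-isogeny descent — in particular the positivity argument that pins $\dim_{\F_2}\operatorname{Im}\bar\alpha$ down to $\le 2$ rather than $3$, which is exactly what makes the two descent contributions combine to $2$ instead of $3$; the remainder is substitution into the displayed identities together with the trivial evaluation of the floor in Theorem~\ref{2main theorem}.
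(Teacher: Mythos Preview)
Your proof is correct and follows essentially the same approach as the paper: both combine Theorem~\ref{2main theorem} (with $d=p$, $D\in\{1,p\}$, giving $\nu_2(\delta_E)\ge 3$) with a $2$-descent rank bound yielding $\rank_\Z(E(\Q))\le 2$. The only difference is cosmetic: the paper invokes Remark~\ref{remark rank} (i.e.\ inequality~\eqref{rank32} from \cite{aguirre2010elliptic}) to read off $\rank\le 2\omega(d)=2$, whereas you carry out the underlying $2$-isogeny descent explicitly, using the positivity of $x^2+4c$ to sharpen $\operatorname{Im}\bar\alpha\subseteq\langle 2,p\rangle$.
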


\section*{Acknowledgment}
I would like to thank Hector Pasten for suggesting this
problem to me, carefully reading the preliminary version of this manuscript and numerous helpful remarks. I thank the anonymous referees for several valuable comments on an earlier version of this manuscript. This paper is based on my thesis, thus, I am very
grateful to my examiners Ricardo Menares and Fabien Pazuki for several suggestions
which have improved the present work. This research was supported by ANID Doctorado Nacional 21190304.

\section{Preliminaries}\label{results}

\subsection{$2$-adic valuation of the modular degree}
\cite[Lemma 3.1]{esparza2021conjecture} gives an equation that relates the modular degree of an elliptic curve $E$ with that of its quadratic twist $E^{(D)}$ by $D$, where $D$ is any fundamental discriminant. We denote by $N$ and $N^{(D)}$ the conductors  of $E$ and $E^{(D)}$, respectively. Before showing this equation, we have to define some invariants which appear on it.
\subsubsection{Petersson Norm} Let $S_2(\Gamma_0 (N ))$ be the space of weight $2$ cuspidal holomorphic modular forms; over this space, we have an inner product that allows us to define the following norm.
\begin{definition}
The Petersson norm of $f\in S_2(\Gamma_0 (N ))$ is defined by
\[
\|f\|_N=\left(\int_{\Gamma_0(N)\backslash \mathfrak{h}}|f(z)|^2 dx \wedge dy\right)^{1/2}, \qquad z = x + iy \text{ and }y>0.
\]
\end{definition}
\begin{observation}
Although this definition depends on the level $N$, we know that if $N\mid M$ and $f\in S_2(\Gamma_0 (N ))$, then $f\in S_2(\Gamma_0 (M))$ and $\|f\|^2_M = [\Gamma_0 (N ) : \Gamma_0 (M )]  \|f\|^2_N$.
\end{observation}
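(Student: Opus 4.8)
The plan is to deduce both assertions from the single inclusion of groups $\Gamma_0(M)\subseteq\Gamma_0(N)$, which holds precisely because $N\mid M$: an integer matrix whose lower–left entry is divisible by $M$ has lower–left entry divisible by $N$. First I would record that this inclusion gives $S_2(\Gamma_0(N))\subseteq S_2(\Gamma_0(M))$. Holomorphy on $\mathfrak{h}$ is untouched, the weight‑$2$ transformation law under $\Gamma_0(M)$ is a special case of the one under $\Gamma_0(N)$, and vanishing at the cusps of $\Gamma_0(M)$ follows from vanishing at those of $\Gamma_0(N)$: for every $\sigma\in\mathrm{SL}_2(\Z)$ the slash $f|_2\sigma$ already has Fourier expansion with zero constant term, and passing from the cusp width of $\Gamma_0(N)$ to that of $\Gamma_0(M)$ merely rescales the local uniformizer, so the constant term stays $0$.

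For the norm identity, the key point is that the $2$-form $\omega_f\coloneqq |f(z)|^2\,dx\wedge dy$ is invariant under $\Gamma_0(N)$, in fact under all of $\mathrm{GL}_2^+(\R)$. Indeed, for a weight‑$2$ form one has $f(\gamma z)\,d(\gamma z)=f(z)\,dz$ — the automorphy factor $(cz+d)^2$ cancels the Jacobian $(cz+d)^{-2}$ — and since $dx\wedge dy=\tfrac{i}{2}\,dz\wedge d\bar z$, the form $\omega_f=\tfrac{i}{2}\,(f\,dz)\wedge\overline{(f\,dz)}$ is manifestly invariant.

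Next I would choose coset representatives $\gamma_1,\dots,\gamma_k$ for $\Gamma_0(M)$ in $\Gamma_0(N)$, where $k\coloneqq[\Gamma_0(N):\Gamma_0(M)]$. Then, up to a set of measure zero, a fundamental domain $\mathcal{F}_M$ for $\Gamma_0(M)\backslash\mathfrak{h}$ is the disjoint union $\bigsqcup_{i=1}^{k}\gamma_i\mathcal{F}_N$, with $\mathcal{F}_N$ a fundamental domain for $\Gamma_0(N)\backslash\mathfrak{h}$. Integrating $\omega_f$ over $\mathcal{F}_M$, splitting over the pieces $\gamma_i\mathcal{F}_N$, and using $\Gamma_0(N)$-invariance of $\omega_f$ to replace each $\int_{\gamma_i\mathcal{F}_N}\omega_f$ by $\int_{\mathcal{F}_N}\omega_f$, one gets $\|f\|_M^2=\int_{\mathcal{F}_M}\omega_f=k\int_{\mathcal{F}_N}\omega_f=k\,\|f\|_N^2$, which is the claim.

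The obstacles here are bookkeeping rather than conceptual: one must check that the overlaps of the $\gamma_i\mathcal{F}_N$ have measure zero (they sit on $\Gamma_0(N)$-translates of $\partial\mathcal{F}_N$) and that the integrals converge near the cusps, which is automatic because cusp forms decay exponentially in the cusp parameter. I would also note that $-I$ lies in both groups and acts trivially on $\mathfrak{h}$, so $[\Gamma_0(N):\Gamma_0(M)]$ equals the index of the images in $\mathrm{PSL}_2(\Z)$, i.e.\ the degree of the covering $\mathfrak{h}/\Gamma_0(M)\to\mathfrak{h}/\Gamma_0(N)$; this makes the fundamental-domain count consistent and causes no discrepancy in the index.
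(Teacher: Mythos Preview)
Your argument is the standard one and is correct; the paper itself states this observation without proof, treating it as a well-known fact about the Petersson inner product, so there is nothing to compare against.

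One small slip worth flagging: you write that $\omega_f=|f(z)|^2\,dx\wedge dy$ is invariant ``in fact under all of $\mathrm{GL}_2^+(\R)$''. That is not true---the identity $f(\gamma z)\,d(\gamma z)=f(z)\,dz$ you invoke on the next line uses the weight-$2$ automorphy of $f$, which only holds for $\gamma\in\Gamma_0(N)$. Fortunately your proof only needs $\Gamma_0(N)$-invariance (the coset representatives $\gamma_i$ lie in $\Gamma_0(N)$), so the conclusion is unaffected; just drop the parenthetical.
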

\subsubsection{Manin Constant} Let $E$ be an elliptic curve defined over $\Q$ of conductor $N$ and let $\omega_E$ be its N\'eron differential. We have that $\phi^{*}_{E} \omega_E$ is a
regular differential on $X_0 (N )$, which implies the following formula:
\[
\phi^{*}_{E} \omega_E = 2\pi ic_E f_E(z)dz
\]
where $c_E$ is a rational number and $f_E$ denotes the Hecke newform attached to $E$. Due to \cite[Proposition 2]{edixhoven1991manin}, $c_E$, which we call the Manin constant, is an integer uniquely defined up to sign.

The mentioned equation given by \cite[Lemma 3.1] {esparza2021conjecture} is the following:
\begin{equation}\label{twist-modular degree}
\frac{m_{E^{(D)}}}{c_{E^{(D)}}^2}=\frac{m_E}{c_E^2}\times\frac{\|f_{E^{(D)}}\|^2_{N^{(D)}}}{\|f_E\|^2_N}\left|\frac{\Delta_{E^{(D)}}}{\Delta_E}\right|^{1/6},
\end{equation}
where $\Delta_E$ denotes the global minimal discriminant of $E$. Equation \eqref{twist-modular degree} implies the following lemma:
\begin{lemma}\label{valuation modular}
Let $E$ be an elliptic curve and $E^{(D)}$ its quadratic twist by $D$. Then 
\begin{equation}\label{val}
\nu_2(m_{E^{(D)}})\geq\nu_2\left(\frac{m_E}{c_E^2}\right)+\nu_2\left(\frac{\|f_{E^{(D)}}\|^2_{N^{(D)}}}{\|f_E\|^2_N}\right)+\frac{1}{6}\nu_2\left(\frac{\Delta_{E^{(D)}}}{\Delta_E}\right).
\end{equation}
\end{lemma}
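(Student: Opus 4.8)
The plan is to obtain \eqref{val} by applying the $2$-adic valuation $\nu_2$ to the identity \eqref{twist-modular degree} and then discarding a manifestly nonnegative term coming from the Manin constant.

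First I would check that each factor appearing in \eqref{twist-modular degree} has a meaningful $2$-adic valuation. The quantities $m_E$, $m_{E^{(D)}}$, $c_E$, $c_{E^{(D)}}$ are nonzero integers, so $\nu_2(m_E/c_E^2)$ and $\nu_2(m_{E^{(D)}}/c_{E^{(D)}}^2)$ are well-defined integers; the global minimal discriminants are nonzero integers, so $\nu_2\!\left(|\Delta_{E^{(D)}}/\Delta_E|^{1/6}\right)=\tfrac16\nu_2(\Delta_{E^{(D)}}/\Delta_E)$ is a well-defined element of $\tfrac16\Z$; and the ratio of Petersson norms $\|f_{E^{(D)}}\|^2_{N^{(D)}}/\|f_E\|^2_N$ is a positive rational number (one checks this directly, or it is forced by \eqref{twist-modular degree} since every other factor has rational valuation). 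Taking $\nu_2$ of \eqref{twist-modular degree} and using that $\nu_2$ is additive on products then yields the exact identity
\[
\nu_2\!\left(\frac{m_{E^{(D)}}}{c_{E^{(D)}}^2}\right)=\nu_2\!\left(\frac{m_E}{c_E^2}\right)+\nu_2\!\left(\frac{\|f_{E^{(D)}}\|^2_{N^{(D)}}}{\|f_E\|^2_N}\right)+\frac16\,\nu_2\!\left(\frac{\Delta_{E^{(D)}}}{\Delta_E}\right).
\]

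Next I would rewrite the left-hand side in terms of $\nu_2(m_{E^{(D)}})$ alone. By additivity, $\nu_2(m_{E^{(D)}}/c_{E^{(D)}}^2)=\nu_2(m_{E^{(D)}})-2\nu_2(c_{E^{(D)}})$, and by Proposition~2 of \cite{edixhoven1991manin} the Manin constant $c_{E^{(D)}}$ is an integer, so $\nu_2(c_{E^{(D)}})\geq 0$. Hence $\nu_2(m_{E^{(D)}})\geq\nu_2(m_{E^{(D)}}/c_{E^{(D)}}^2)$, and substituting this bound into the displayed identity gives exactly \eqref{val}.

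I do not expect a genuine obstacle: the statement is an essentially formal consequence of \eqref{twist-modular degree}. The only points deserving a sentence of care are that $\nu_2$ of the Petersson-norm ratio is legitimate (addressed above) and that the passage from the identity to the inequality \eqref{val} throws away the nonnegative term $2\nu_2(c_{E^{(D)}})$ — which is precisely why the conclusion is phrased as a lower bound; one recovers an equality whenever $c_{E^{(D)}}$ is odd, but the inequality is all that the subsequent applications need.
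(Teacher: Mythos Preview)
Your proposal is correct and is precisely the argument the paper has in mind: the lemma is stated immediately after \eqref{twist-modular degree} as its direct consequence, and your write-up just makes explicit the two trivial steps (additivity of $\nu_2$ and dropping the nonnegative term $2\nu_2(c_{E^{(D)}})$). Nothing more is needed.
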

\subsection{Elliptic curves with a rational point of order 2 and prime power conductor}\label{classification}
In this section, we classify all the elliptic curves defined over $\Q$ with nontrivial rational $2$-torsion and conductor a power of a prime. 
\begin{remark}\label{conductor}
Suppose that $E$ is an elliptic curve with conductor a power of a prime $p^\alpha$. Then we have that $\alpha\leq 2$ for $p>3$, $\alpha\leq 5$ for $p=3$ and $\alpha\leq 8$ for $p=2$ (see \cite[IV.10]{silverman1994}).
\end{remark}
We shall write $[a_1,a_2,a_3,a_4,a_6]$ for the elliptic curve
\[
y^2+a_1xy+a_3y=x^3+a_2x^2+a_4x+a_6.
\]
We begin with the elliptic curves with prime conductor. Setzer \cite{setzer1975elliptic} proved that for $p\neq 17$ there exists an elliptic curve with prime conductor $p$ and a rational point of $2$ if and only if $p=u^2+64$ for some integer $u$, in which case there are two nonisomorphic elliptic curves with conductor $p$. The minimal models of these elliptic curves are
\begin{table}[H]
    \centering
    \begin{tabular}{|c|c|c|c|}
 \hline
LMFDB label & Weierstrass coefficients & $\Delta$  & $2$-Torsion\\ 
 \hline
 $p.a1$ & $[1, (u-1)/4, 0, -1, 0]$ & $p$ & $\Z/2\Z$\\
 $p.a2$ & $[1,(u-1)/4,0,4,u]$ & $-p^2$ & $\Z/2\Z$\\
  \hline
\end{tabular}
    \caption{Elliptic curves with prime conductor $p>17$ and nontrivial rational $2$-torsion}
    \label{table1}
\end{table}
A $2$-isogeny connects these two elliptic curves. Moreover, the work of Mestre \& Oesterl\'e \cite{mestre1989courbes} implies that the curve $p.a2$ is the $X_0(p)$-optimal, where an elliptic curve $E$ is called  $X_0(N)$-optimal if it has the minimal modular degree $m_E$ in its isogeny class. Additionally, every other modular parametrization of a curve in the isogeny class factors through the minimal-degree modular
parametrization of the optimal curve. 

For $p=17$ Setzer \cite{setzer1975elliptic} shows that there are four nonisomorphic elliptic curves. From now on, $(*)$ indicates which elliptic curve is the $X_0(N)$-optimal. 
\begin{table}[H]
    \centering
\begin{tabular}{|c|c|c|c|c|c|}
 \hline
LMFDB label & Weierstrass coefficients & $m_E$ & $c_E$ & $\Delta$ & $2$-Torsion \\ 
 \hline
 $17.a1$ & $[1, -1, 1, -91, -310]$ & $4$ & $2$ & $17$ & $\Z/2\Z$ \\
 $17.a2$ & $[1,-1,1,-6,-4]$ & $2$ & $2$ & $17^2$ & $(\Z/2\Z)^2$ \\
 $17.a3(*)$ & $[1,-1,1,-1,-14]$ & $1$ & $1$ & $-17^4$ & $\Z/2\Z$\\
 $17.a4$ & $[1, -1, 1, -1, 0]$ & $4$ & $4$ & $17$& $\Z/2\Z$\\
 \hline
\end{tabular}
\caption{Elliptic curves with conductor $17$}
    \label{table2}
\end{table}
On the other hand, Mulholland \cite{mulholland2006elliptic} proved that for $p>3$ the elliptic curves with a rational point of order $2$ and conductor $p^2$ are the quadratic twist by $p$ of the elliptic curves in Tables \ref{table1} and \ref{table2} by $p$, together with the ones with conductor $49$ listed below
\begin{table}[H]
    \centering
\begin{tabular}{|c|c|c|c|c|c|}
 \hline
LMFDB label & Weierstrass coefficients & $m_E$ & $c_E$ & $\Delta$ & $2$-Torsion  \\ 
 \hline
 $49.a1$ & $[1, -1, 0, -1822, 30393]$ & $14$ & $1$ & $7^9$ & $\Z/2\Z$ \\
 $49.a2$ & $[1,-1,0,-107,552]$ & $7$ & $1$ & $7^9$ & $\Z/2\Z$\\
 $49.a3$ & $[1, -1, 0, -37, -78]$ & $2$ & $1$ & $7^3$ & $\Z/2\Z$\\
 $49.a4(*)$ & $[1,-1,0,-2,-1]$ & $1$ & $1$ & $7^2$ & $\Z/2\Z$\\
 \hline
\end{tabular}
\caption{Elliptic curves with conductor $49$}
    \label{table3}
\end{table}
By Remark \ref{conductor} we have classified the elliptic curves whose conductor is a power of a prime for $p>3$ and nontrivial rational $2$-torsion. Using the database \cite{LMFDB}, we can classify the elliptic curves with a rational point of order $2$ and conductor a power of $2$ or $3$. We noticed that there are no elliptic curves with a rational point of order $2$ of conductor $3^m$  for any integer $m$. Thus, by Remark \ref{conductor} we only need to list the ones with conductor $2^m$ with $m\in\{5,6,7,8\}$.  

The following table shows the elliptic curves with conductor $2^5$
\begin{table}[H]
    \centering
\begin{tabular}{|c|c|c|c|c|c|}
 \hline
LMFDB label & Weierstrass coefficients & $m_E$ & $c_E$ & $\Delta$ & $2$-Torsion \\ 
 \hline
 $32.a1$ & $[0,0,0,-11,-14]$ & $4$ & $2$ & $2^9$ & $\Z/2\Z$ \\
 $32.a2$ & $[0,0,0,-11,14]$ & $4$ & $2$ & $2^9$ & $\Z/2\Z$\\
 $32.a3$ & $[0,0,0,-1,0]$ & $2$ & $2$ & $2^6$ & $(\Z/2\Z)^2$\\
 $32.a4(*)$ & $[0,0,0,4,0]$ & $1$ & $1$ & $-2^{12}$ & $\Z/2\Z$\\
 \hline
\end{tabular}
\caption{Elliptic curves with conductor $32$}
    \label{table4}
\end{table}
Let us mention that 32.a3 is called the \textit{congruent number} curve. Meanwhile, the elliptic curves of conductor $2^6$ are the quadratic twists of the previous ones by $2$. Finally, the elliptic curves with conductor $2^7$ are listed in the following table
\begin{table}[H]
    \centering
\begin{tabular}{|c|c|c|c|c|c|}
 \hline
 LMFDB label & Weierstrass coefficients & $m_E$ & $c_E$ &$\Delta$  & $2$-Torsion
 \\ 
 \hline
$128.a1$ & $[0, 1, 0, -9, 7]$ & $8$ & $1$ & $2^{13}$& $\Z/2\Z$\\
$128.a2(*)$ & $[0, 1, 0, 1, 1]$ & $4$ & $1$ & $-2^8$ & $\Z/2\Z$\\
$128.b1$ & $[0, 1, 0, -2, -2]$ & $16$ & $2$ & $2^7$ & $\Z/2\Z$\\
$128.b2(*)$ & $[0, 1, 0, 3, -5]$ & $8$ & $1$ & $-2^{14}$ & $\Z/2\Z$\\
$128.c1$ & $[0, -1, 0, -9, -7]$ & $8$ & $1$ & $2^{13}$ & $\Z/2\Z$\\
$128.c2(*)$ & $[0, -1, 0, 1, -1]$ & $4$ & $1$ & $-2^8$ & $\Z/2\Z$\\
$128.d1$ & $[0, -1, 0, -2, 2]$ & $16$ & $2$ & $2^7$ & $\Z/2\Z$\\
$128.d2(*)$ & $[0, -1, 0, 3, 5]$ & $8$ & $1$ & $- 2^{14}$ & $\Z/2\Z$\\
 \hline
\end{tabular}
\caption{Elliptic curves with conductor $128$}
    \label{table5}
\end{table}
and again the elliptic curves of conductor $2^8$ are the quadratic twists of the previous ones by $2$.
\subsection{The Mordell-Weil Rank}\label{Mordell-Weil Rank} \cite[Section X.4]{silverman2009arithmetic}  gives a bound for a 2-isogeny Selmer rank. This work allows Caro \& Pasten \cite{caro2021watkins} to find an upper bound for the Mordell-Weil rank of an elliptic curve with nontrivial rational $2$-torsion
\begin{equation}\label{rank}
\rank_\Z(E(\Q)) \leq 2\omega(N) - 1.    
\end{equation}
Furthermore, \cite[Proposition 1.1]{aguirre2010elliptic} shows that if the elliptic curve has minimal Weierstrass equation $y^2=x^3+Ax^2+Bx$, we obtain that
\begin{equation}\label{rank32}
\rank_\Z(E(\Q)) \leq \omega(A^2-4B) + \omega(B) - 1.    
\end{equation}
We define the function $\delta_2(D)$ on the set of integers as follows:
\[
\delta_2(D)=
\begin{cases}
1 & \text{if }2\mid D\\
0 & \text{otherwise.}
\end{cases}
\]
Using inequality \eqref{rank32} we have the following lemma:
\begin{lemma}\label{rankbound}
Let $E$ be an elliptic curve with a rational point of order $2$ and prime power conductor $N=p^\alpha$, and let $E^{(D)}$ its quadratic twist by $D$, with $D$ a fundamental discriminant. Then we have
\begin{equation*}
\rank_\Z(E^{(D)}(\Q)) \leq 2\omega(D) + 1-2\nu_p(D).    
\end{equation*}
Even sharper, if $E$ is $32.a3$ we have
\begin{equation*}
\rank_\Z(E^{(D)}(\Q)) \leq 2\omega(D) -\delta_2(D).
\end{equation*}
\end{lemma}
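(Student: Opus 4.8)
The plan is to derive everything from the classification in Section~\ref{classification} together with the two rank bounds \eqref{rank} and \eqref{rank32}, after normalizing $D$.

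Since $E^{(D)}$ depends only on the squarefree part of $D$, I would first assume $D$ squarefree, so $\nu_p(D)\in\{0,1\}$, and then reduce to the case $p\nmid D$. If $\nu_p(D)=1$, write $D=pD_0$ with $D_0$ squarefree and prime to $p$; then $E^{(D)}\cong\bigl(E^{(p)}\bigr)^{(D_0)}$, and $E^{(p)}$ is again a curve with rational $2$-torsion whose conductor is a power of $p$ (twisting by $p$ changes the reduction type only at $p$), while $\omega(D_0)=\omega(D)-1$ and $\nu_p(D_0)=0$. Hence the assertion for $\bigl(E^{(p)}\bigr)^{(D_0)}$ in the range ``$p\nmid D_0$'' reads $2\omega(D_0)+1=2\omega(D)-1$, which is exactly the claim. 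So it is enough to prove
\[
\rank_\Z\bigl(E^{(D)}(\Q)\bigr)\le 2\omega(D)+1\qquad\text{whenever }p\nmid D,
\]
and to extract the sharper statement for $E=32.a3$ along the way.

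Assume now $p\nmid D$. Going through the classification, for each base curve $E$ — or for a curve $\Q$-isogenous to it, whose quadratic twists are $\Q$-isogenous to those of $E$ and hence have the same rank — I would move a rational $2$-torsion point to the origin and clear denominators to obtain an integral model $y^2=x^3+A_0x^2+B_0x$ whose semidiscriminant $B_0(A_0^2-4B_0)$ is supported on $\{2,p\}$; apart from conductor $49$ one checks case by case that this can be done with $\omega(B_0)+\omega(A_0^2-4B_0)\le 2$ (e.g. $A_0^2-4B_0=p$, $B_0=-16$ for $p.a1$; $B_0=p$, $A_0^2-4B_0=-256$ for $p.a2$; $B_0=-1$, $A_0^2-4B_0=4$ for $32.a3$). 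Since $E^{(D)}$ then has the model $y^2=x^3+(A_0D)x^2+(B_0D^2)x$, and $p\nmid D$, \eqref{rank32} gives
\[
\rank_\Z\bigl(E^{(D)}(\Q)\bigr)\le \omega\bigl(D^2(A_0^2-4B_0)\bigr)+\omega\bigl(D^2B_0\bigr)-1\le 2\omega(D)+\omega(A_0^2-4B_0)+\omega(B_0)-1\le 2\omega(D)+1 .
\]
For $E=32.a3$ the two extra terms sum to $1$, so the bound improves to $2\omega(D)$; feeding this into the two values $\nu_2(D)\in\{0,1\}$ — and using that the reduction of the previous paragraph sends $32.a3$ with $\nu_2(D)=1$ to $32.a4$, whose model has $\omega(B_0)+\omega(A_0^2-4B_0)=2$ — yields the stated $2\omega(D)-\nu_2(D)$.

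The single delicate point is conductor $49$. For each of $49.a1,\dots,49.a4$ every model $y^2=x^3+A_0x^2+B_0x$ has $\omega(B_0)+\omega(A_0^2-4B_0)=3$: the curve has good reduction at $2$, so any such model is non-minimal at $2$ and $2$ divides $B_0(A_0^2-4B_0)$, while $7$ divides both $B_0$ and $A_0^2-4B_0$; thus the estimate above loses one unit. To repair it I would use that these four curves have complex multiplication by an order in $\Q(\sqrt{-7})$, and sharpen the $\phi$-isogeny descent by imposing the local condition at $2$: for the twists with good reduction at $2$ (those with $D\equiv 1\pmod 4$) this local condition removes the spurious prime $2$ from the Selmer group and restores the bound $2\omega(D)+1$ (in fact $2\omega(D)$), and for the remaining twists I would fall back on the conductor bound \eqref{rank}, or run the descent through the cyclic $4$- and $8$-isogeny appearing in the isogeny class of $49.a4$. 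Carrying this out uniformly in the congruence class of $D$ is the step I expect to be the main obstacle.
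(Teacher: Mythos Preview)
Your argument is overcomplicated, and the ``delicate point'' at conductor $49$ is a self-inflicted obstacle. The first inequality in the lemma follows in one line from \eqref{rank} alone, with no case analysis and no appeal to the classification: since $E^{(D)}[2]\cong E[2]$ as Galois modules, $E^{(D)}$ has nontrivial rational $2$-torsion, and the primes dividing $N^{(D)}$ lie in $\{p\}\cup\{\text{primes of }D\}$, so $\omega(N^{(D)})\le \omega(D)+1-\nu_p(D)$. Plugging this into \eqref{rank} gives
\[
\rank_\Z(E^{(D)}(\Q))\le 2\omega(N^{(D)})-1\le 2\omega(D)+1-2\nu_p(D),
\]
which is exactly the claim, uniformly for every prime-power conductor including $49$. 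Your attempt to push everything through \eqref{rank32} forces you to produce a model with $\omega(B_0)+\omega(A_0^2-4B_0)\le 2$ for every base curve, and as you correctly observe this fails for the conductor-$49$ isogeny class; the CM/descent workaround you sketch is unnecessary and is also left incomplete.

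For the sharper bound when $E=32.a3$, your idea is the same as the paper's: the twist is $y^2=x^3-D^2x$, so \eqref{rank32} with $A=0$, $B=-D^2$ gives
\[
\rank_\Z(E^{(D)}(\Q))\le \omega(4D^2)+\omega(D^2)-1=(\omega(D)+1-\nu_2(D))+\omega(D)-1=2\omega(D)-\nu_2(D).
\]
There is no need for your detour through $E^{(2)}$ (which, incidentally, is not $32.a4$ but a conductor-$64$ curve); the computation above already handles both parities of $D$ at once.
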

\begin{proof}
We know that $E^{(D)}[2]\cong E[2]$ as Galois modules because one is the other multiplied by the character $\chi_D$ and $\chi_D$ takes values congruent to $1$ (mod $2$). Then $E^{(D)}$ also has nontrivial rational $2$-torsion, so applying the inequality \eqref{rank} we have
\begin{align*}
\rank_\Z(E^{(D)}(\Q)) &\leq 2\omega(N^{(D)})-1\leq 2(\omega(D) + (1-\nu_p(D)))-1\\ &=2\omega(D) + 1-2\nu_p(D).    
\end{align*}
The second inequality is derived from the observation that the primes dividing $N^{(D)}$ are are exactly those primes that divide $D$ and $p$. If $E$ is $32.a3$ its quadratic twist $E^{(D)}$ is $y^2=x^3-D^2x$, in particular, we can apply inequality \eqref{rank32}, so we obtain
\begin{align*}
\rank_\Z(E(\Q)) &\leq \omega(4D^2) + \omega(-D^2) - 1\\
&=\omega(D)+(1-\delta_2(D)) + \omega(D) - 1\\
&=2\omega(D)-\delta_2(D),
\end{align*}
which ends the proof.
\end{proof}
\begin{remark}\label{remark rank}
Note that we can also apply the inequality (\ref{rank32}) to $y^2=x^3-dx$, for any integer $d$, and again, we obtain that $\rank_\Z(E(\Q))\leq 2\omega(d)-\delta_2(d)$. 
\end{remark}
\section{Lower bounds for some $2$-adic valuations}
This section aims to give lower bounds for the $2$-adic valuation of the invariants in Lemma \ref{valuation modular}. 

\subsection{Minimal discriminants}
We start by finding a lower bound for 
\begin{equation}\label{valuation discriminant}
\nu_2\left(\left(\frac{\Delta_{E^{(D)}}}{\Delta_E}\right)^{1/6}\right)=\frac{1}{6}\nu_2\left(\frac{\Delta_{E^{(D)}}}{\Delta_E}\right).
\end{equation}
\begin{definition}
Let $p$ be a prime. The $p$-adic signature of an elliptic curve $E$ is the triple $(\nu_p(c_4(E)), \nu_p(c_6(E)), \nu_p(\Delta(E)))$, where $c_4,c_6$ are the usual Weierstrass invariants of a minimal model of $E$ and $\Delta(E)$ denotes the minimal discriminant of $E$.
\end{definition}
Pal \cite{pal2012periods} classifies the valuation \eqref{valuation discriminant} in terms of the $2$-adic signature of $E$. To begin with, we compute the $2$-adic signature of an elliptic curve with odd discriminant and a rational point of order $2$.
\begin{lemma}\label{Lemma signature}
Let $E$ be an elliptic curve with a rational point of order $2$ and odd discriminant. Then the $2$-adic signature of $E$ is $(0,0,0)$. 
\end{lemma}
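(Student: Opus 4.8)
The plan is to reduce the whole statement to a single parity fact: that in a global minimal Weierstrass equation $y^2+a_1xy+a_3y=x^3+a_2x^2+a_4x+a_6$ for $E$, the coefficient $a_1$ is odd. Since $\Delta(E)$ \emph{is} the minimal discriminant, the hypothesis gives $\nu_2(\Delta(E))=0$ immediately, so the third entry of the signature is settled. For the first entry, $b_2=a_1^2+4a_2$ gives $c_4=b_2^2-24b_4\equiv a_1^4\equiv a_1\pmod 2$, so $\nu_2(c_4(E))=0$ is equivalent to $a_1$ being odd; and once $c_4$ is odd, the Weierstrass identity $c_4^3-c_6^2=1728\,\Delta$ with $2\mid 1728$ shows $c_6^2=c_4^3-1728\Delta$ is odd, hence $\nu_2(c_6(E))=0$. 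So everything follows from $a_1$ being odd.

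Assume for contradiction that $a_1$ is even. I first observe that $a_3$ must then be odd: if $a_1$ and $a_3$ were both even, then $b_2,b_4,b_6$ would all be even, and the formula $\Delta=-b_2^2b_8-8b_4^3-27b_6^2+9b_2b_4b_6$ would give $2\mid\Delta$, contrary to the hypothesis. Hence $a_3$ is odd, and therefore $b_6=a_3^2+4a_6$ is odd, $b_2=a_1^2+4a_2\equiv 0\pmod 4$, and $b_4=2a_4+a_1a_3$ is even.

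Now I bring in the rational $2$-torsion point. Completing the square, a point $(x_0,y_0)$ of $E$ has order dividing $2$ exactly when $4x_0^3+b_2x_0^2+2b_4x_0+b_6=0$, and by hypothesis such an $x_0\in\Q$ exists. Writing $x_0=m/n$ in lowest terms and clearing denominators in this cubic forces $n\mid 4$, so $n\in\{1,2,4\}$. In each case, multiplying the relation by the appropriate power of $2$ and reducing modulo $2$ leaves a single term: $b_6$ (which is odd) when $n=1$, and $m^3$ (with $m$ odd) when $n\in\{2,4\}$; in all three cases this yields $0\equiv 1\pmod 2$, a contradiction. Hence $a_1$ is odd, so $c_4$ and $c_6$ are odd, and the $2$-adic signature of $E$ is $(0,0,0)$.

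The only step that is not a one-line computation is this last denominator analysis, and it is exactly where the hypothesis on $2$-torsion is used. Conceptually, $\Delta(E)$ odd means $E$ has good reduction at $2$, and $a_1$ even would force that reduction to be \emph{supersingular}; but a supersingular curve over $\F_2$ has an odd number of points (so no $2$-torsion), and the $2$-torsion of the formal group over $\Z_2$ is also trivial in the supersingular case, since its nonzero $2$-torsion points have valuation $1/3$. Either the explicit case check above or this reduction-theoretic route closes the proof, and I expect the explicit one is the cleanest to write down in full.
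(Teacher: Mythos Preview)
Your proof is correct and follows the same strategy as the paper's: reduce the whole statement to showing $a_1$ is odd, then assume $a_1$ even and derive a contradiction from the fact that the $2$-torsion cubic must have a rational root. The only difference is in packaging that last step: the paper scales the cubic to $x^3+b_2x^2+8b_4x+16b_6$, observes its Newton polygon is a single segment of slope $-4/3$, and invokes Dumas's irreducibility criterion, whereas you carry out the equivalent denominator case-check $n\in\{1,2,4\}$ by hand---your version is more self-contained, and the supersingular-reduction remark you add (which the paper does not mention) is a nice conceptual gloss on the same phenomenon.
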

\begin{proof}
Assume that the minimal model of $E$ is of the form
\[
y^2+a_1xy+a_3y=x^3+a_2x^2+a_4x+a_6.
\]
Since $E$ has good reduction at $2$, then either $a_1$ or $a_3$ must be odd. Furthermore, because $E(\Q)[2]\neq \{0\}$, there exists $x_0\in \Q$ such that
\begin{equation}\label{eq1}
x_0^3+b_2x_0^2+8b_4x_0+16b_6=0,    
\end{equation}
where $b_2$, $b_4$ and $b_6$ are the usual Weierstrass invariants. We first suppose for a contradiction that $a_1$ is even. Then $\nu_2(b_2)=\nu_2(a_1^2+4a_2)\geq 2$,  $\nu_2(b_4)=\nu_2(a_1a_3+2a_4)\geq 1$, and  $a_3$ must be odd, hence $b_6=a_3^2+4a_6$ is odd too. Consequently, the Newton polygon (as it is noticed in Section 2 of \cite{setzer1975elliptic}) attached to \eqref{eq1} is a line with slope $-4/3$, so,  by Dumas’s irreducibility criterion (cf. the corollary in p.55 of \cite{prasolov2004polynomials}) this polynomial has no rational solutions, which is a contradiction. Hence $b_2$ is odd, and therefore $c_4 = b_2^2 -24b_4$ and $c_6 =-b_2^3 +36b_2b_4-216b_6$ are odd too.
\end{proof}

\begin{corollary}\label{odd discriminant}
Let $E$ be an elliptic curve with a rational point of order $2$ and odd discriminant. Then we have 
\[
\frac{1}{6}\nu_2\left(\frac{\Delta_{E^{(D)}}}{\Delta_E}\right)\geq \begin{cases}
0& \text{if }  D\equiv 1\text{(mod }4)\\
2& \text{if }  2\mid D.
\end{cases}
\]
\end{corollary}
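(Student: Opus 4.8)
The point is that the displayed quantity is purely local at $2$: since $E$ has odd discriminant we have $\nu_2(\Delta_E)=0$, so $\tfrac16\nu_2(\Delta_{E^{(D)}}/\Delta_E)=\tfrac16\nu_2(\Delta_{E^{(D)}})$ is determined by the minimal Weierstrass model of $E^{(D)}$ over $\Q_2$, hence by the isomorphism class of $E$ over $\Q_2$ — equivalently, its $2$-adic signature — together with the class of $D$ in $\Q_2^\times/(\Q_2^\times)^2$. Precisely this dependence is tabulated in Proposition 2.4 of \cite{pal2012periods}, so the plan is simply to combine that tabulation with the previous Lemma.

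In detail: the previous Lemma tells us that an elliptic curve with rational $2$-torsion and odd discriminant has $2$-adic signature $(0,0,0)$, i.e.\ good reduction at $2$. I would then substitute ``signature $(0,0,0)$'' into Proposition 2.4 of \cite{pal2012periods} and split according to the ramification of $\Q(\sqrt D)/\Q$ at $2$. One may take $D$ squarefree; then the three regimes are $D\equiv 1\pmod 4$ (unramified at $2$, so $E^{(D)}$ still has good reduction at $2$ and the normalized ratio has value $0$), $D\equiv -1\pmod 4$ (a ramified quadratic extension of $\Q_2$ coming from a unit, for which Pal's table returns the value $2$), and $2\mid D$ (ramified, coming from a uniformizer, returning the value $3$). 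Assembling these is exactly the asserted case distinction.

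I do not expect a genuine difficulty: the mathematical content is the $2$-adic signature computation of the previous Lemma, and what is left is bookkeeping — aligning Pal's normalizations (global versus local minimal discriminant, reduction to squarefree $D$) and checking that the answer is insensitive to the residue of $D$ modulo $8$ beyond what is recorded above. This last check is the only thing I would write out explicitly; it is immediate for $D\equiv 1\pmod 4$, since a curve with good reduction twisted by the unramified quadratic character again has good reduction (whether $D\equiv 1$ or $5\pmod 8$), and for the two ramified regimes it amounts to observing that the corresponding rows of Proposition 2.4 in \cite{pal2012periods}, specialized to signature $(0,0,0)$, each take a single value. If one prefers an argument not relying on \cite{pal2012periods}, the same three outputs can be obtained by running Tate's algorithm at $2$ on a twisted minimal model of $E$ — conveniently, the model $y^2+a_1xy+a_3y=\cdots$ with $a_1$ odd constructed in the proof of the previous Lemma — in each case, which is elementary but longer.
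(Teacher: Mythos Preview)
Your proposal is correct and follows essentially the same approach as the paper: the paper's own argument is simply the one-line observation that the previous Lemma (giving $2$-adic signature $(0,0,0)$) together with Proposition 2.4 of \cite{pal2012periods} yields the corollary. Your write-up is in fact more detailed than the paper's, which omits the case-by-case bookkeeping you describe.
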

\begin{proof}
By \cite[Proposition 2.4.2.(a)]{pal2012periods} if $D\equiv 1\text{ (mod }4)$, $\nu_2(\Delta_{E})=\nu_2(\Delta_{E^{(D)}})$. On the other hand, if $2\mid D$ the associated squarefree integer $D^{*}$ to $D$ is congruent to $-1$ or $2$ modulo $4$. Lemma \ref{Lemma signature} implies that the $2$-adic signatue of $E$ is $(0,0,0)$. By \cite[Proposition 2.4.2.(b).i]{pal2012periods} when $D^{*}\equiv -1\text{ (mod }4)$, $\nu_2(\Delta_{E})=\nu_2(\Delta_{E^{(D)}})=12$. Finally, by \cite[Proposition 2.4.2.(c).i]{pal2012periods} when $D^{*}\equiv 2\text{ (mod }4)$, $\nu_2(\Delta_{E})=\nu_2(\Delta_{E^{(D)}})=18$.
\end{proof}

Finally, we list the $2$-adic signature of the elliptic curves with conductor $2^5$ and $2^7$.
\begin{table}[H]
\centering
\begin{tabular}{|c|c| c|}
 \hline
 LMFDB label & $(c_4(E),c_6(E))$ & $2$-adic signature 
 \\ 
 \hline
$32.a1$ & $(528,12096)$ & $(4,6,9)$ \\
$32.a2$ & $(528,-12096)$ & $(4,6,9)$ \\
$32.a3$ & $(48,0)$ & $(4,\infty,6)$ \\
$32.a4$ & $(-192,0)$ & $(6,\infty,12)$\\
$128.a1$ & $(448,-8704)$ & $(6,9,13)$\\
$128.a2$ & $(-32,-640)$ & $(5,7,8)$\\
$128.b1$ & $(112,1088)$ & $(4,6,7)$\\
$128.b2$ & $(-128,5120)$ & $(7,10,14)$\\
$128.c1$ & $(448,3392)$ & $(6,6,13)$\\
$128.c2$ & $(-32,1088)$ & $(5,6,8)$\\
$128.d1$ & $(112,-2368)$ & $(4,6,7)$\\
$128.d2$ & $(-128,-3520)$ & $(7,6,14)$\\
 \hline
\end{tabular}
\caption{$2$-adic signature}
    \label{table6}
\end{table}
Analogously to the proof of Corollary \ref{odd discriminant}, Table \ref{table6} and \cite[Proposition 2.4.2]{pal2012periods} imply the following lemma:
\begin{lemma}\label{even discriminant}
Let $E$ be an elliptic curve with conductor $2^5$ or $2^7$. Then we have \[
\frac{1}{6}\nu_2\left(\frac{\Delta_{E^{(D)}}}{\Delta_E}\right)\geq-\delta_2(D)
\]
\end{lemma}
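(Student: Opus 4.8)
The plan is to argue as in the proof of Corollary \ref{odd discriminant}, invoking Proposition 2.4 of Pal \cite{pal2012periods}, which determines $\tfrac{1}{6}\nu_2(\Delta_{E^{(D)}}/\Delta_E)$ from the $2$-adic signature of $E$ --- recorded for the twelve relevant curves in Table \ref{table6} --- and the square class of $D$ in $\Q_2^\times$. First I would reduce to $D$ squarefree: since $E^{(D)}$ depends only on the squarefree part $D'$ of $D$, one has $\Delta_{E^{(D)}}=\Delta_{E^{(D')}}$ while $\nu_2(D)\geq\nu_2(D')$, so it suffices to treat squarefree $D$, where $\nu_2(D)\in\{0,1\}$; thus we must show that $\tfrac{1}{6}\nu_2(\Delta_{E^{(D)}}/\Delta_E)$ is $\geq 0$ when $D$ is odd and $\geq -1$ when $\nu_2(D)=1$.

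Next I would make the transformation of the $2$-adic signature under the twist explicit. Each curve in Table \ref{table6} has $a_1=a_3=0$, so $E^{(D)}$ has the integral model $y^2=x^3+Da_2x^2+D^2a_4x+D^3a_6$, whose invariants satisfy $\nu_2(c_4)=\nu_2(c_4(E))+2\nu_2(D)$ and $\nu_2(\Delta)=\nu_2(\Delta(E))+6\nu_2(D)$; from Table \ref{table6}, $\nu_2(c_4(E))\in\{4,5,6,7\}$ and $\nu_2(\Delta(E))\leq 14$. I would also use the elementary fact that any integral Weierstrass model has $\nu_2(c_4)=0$ or $\nu_2(c_4)\geq 4$: if $b_2$ is odd then $c_4\equiv 1\pmod{8}$, while if $4\mid b_2$ then $a_1$ is even, hence $2\mid b_4$ and $16\mid c_4$. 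For $D$ odd the displayed model has the same $2$-adic signature as $E$, and it is minimal at $2$: a reduction with $u=2$ would require $\nu_2(\Delta(E))\geq 12$, which by Table \ref{table6} forces $\nu_2(c_4(E))\in\{6,7\}$, and would then leave a model with $\nu_2(c_4)\in\{2,3\}$, which is impossible; hence $\nu_2(\Delta_{E^{(D)}})=\nu_2(\Delta_E)$ and $\tfrac{1}{6}\nu_2(\Delta_{E^{(D)}}/\Delta_E)=0$. For $\nu_2(D)=1$ the displayed model has $\nu_2(\Delta)=\nu_2(\Delta(E))+6$; at most one reduction with $u=2$ can occur, since after it the discriminant valuation would be $\nu_2(\Delta(E))-6\leq 8<12$, so $\nu_2(\Delta_{E^{(D)}})\geq\nu_2(\Delta(E))-6$ and $\tfrac{1}{6}\nu_2(\Delta_{E^{(D)}}/\Delta_E)\geq -1$. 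In both cases this is $\geq -\nu_2(D)$.

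There is no serious obstacle here; the work is the bookkeeping with Table \ref{table6} together with the standard behaviour of minimal Weierstrass models under a change of variables with $u=2$. The one point deserving care is the $\nu_2(D)=1$ case, where one must exclude a second reduction step --- precisely where the bound $\nu_2(\Delta(E))\leq 14$ is used --- and where the estimate is sharp (for instance for twists of $32.a4$ or $128.b2$). Equivalently, one may simply read off the value of $\tfrac{1}{6}\nu_2(\Delta_{E^{(D)}}/\Delta_E)$ from Proposition 2.4 of \cite{pal2012periods} for each signature listed in Table \ref{table6} and check directly that it never drops below $-\nu_2(D)$.
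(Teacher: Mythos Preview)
Your argument is correct. The reduction to squarefree $D$, the elementary fact that any integral model has $\nu_2(c_4)\in\{0\}\cup[4,\infty)$, and the minimality check in the two cases $\nu_2(D)=0,1$ all hold, and the bound $\nu_2(\Delta(E))\leq 14$ from Table~\ref{table6} is exactly what is needed to exclude a second reduction step when $2\mid D$.

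The paper takes a shorter route: it simply applies Pal's Proposition~2.4 to each $2$-adic signature in Table~\ref{table6}, which directly returns the value of $\tfrac{1}{6}\nu_2(\Delta_{E^{(D)}}/\Delta_E)$ case by case. You mention this as an equivalent alternative at the end, but your main argument is more self-contained: instead of quoting Pal's classification, you verify minimality of the twisted model by hand via the $c_4$-valuation obstruction. This buys independence from an external reference at the cost of a slightly longer check; the paper's one-line citation is terser but less transparent about why the bound holds. Either approach is adequate here.
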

\subsection{Petersson norms}
Now we want to relate the $2$-adic valuation of the Petersson norms of $f_{E^{(D)}}$ and $f_E$.
\begin{proposition}\label{Petersson Norm}
Let $E$ be an elliptic curve with a rational point of order $2$ and minimal conductor $N$ among all its quadratic twists.
\begin{itemize}
    \item [(I)] Assume that $N$ is a power of an odd prime, and if $N=p^2$, we only consider $D$ such that $p\nmid D$. Then, we have 
    \[
    \nu_2\left(\frac{\|f_{E^{(D)}}\|^2_{N^{(D)}}}{\|f_E\|^2_N}\right)\geq 3\omega(D).
    \]
    \item [(II)] Furthermore, if $E$ is $17.a4$ in Table \ref{table2} we have
    \[
    \nu_2\left(\frac{\|f_{E^{(D)}}\|^2_{N^{(D)}}}{\|f_E\|^2_N}\right)\geq 4\omega(D).
    \]
    \item [(III)] If $N$ is $2^5$ or $2^7$ we have
    \[
\nu_2\left(\frac{\|f_{E^{(D)}}\|^2_{N^{(D)}}}{\|f_E\|^2_N}\right)\geq 3\omega(D)-2\delta_2(D).
\]
\item [(IV)] Furthermore, if $E$ is $32.a3$ in Table \ref{table4} we have
    \[
    \nu_2\left(\frac{\|f_{E^{(D)}}\|^2_{N^{(D)}}}{\|f_E\|^2_N}\right)\geq 4\omega(D)-3\delta_2(D).
    \]
\end{itemize}
\end{proposition}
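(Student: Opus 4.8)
The plan is to compare the Petersson norm of the newform attached to a quadratic twist with that of the original newform by passing through a common level and tracking only $2$-adic valuations. Write $D = \pm 2^{e}q_1\cdots q_t$ with the $q_i$ distinct odd primes, so $\omega(D) = t + \nu_2(D)$ where $\nu_2(D) \in \{0,1\}$ once $D$ is squarefree (the relevant case, since only the twist matters). The key analytic input, which I would state as a preliminary lemma, is the classical formula relating the Petersson norm of a newform $g$ of level $M$ to that of its twist $g\otimes\chi$ by a quadratic character of conductor $f_\chi$; after raising both $f_E$ and $f_{E^{(D)}}$ to a common level that contains both conductors, the ratio $\|f_{E^{(D)}}\|^2/\|f_E\|^2$ becomes an explicit product over the primes dividing $D$ of local factors $[\Gamma_0(N_\ell):\Gamma_0(N_\ell')]$-type indices. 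Using the multiplicativity of the index and of the conductor exponent of a twist, each odd prime $q_i \mid D$ contributes a factor whose $2$-adic valuation is at least $3$ (this is the heart of part (I)); a prime of bad reduction where the twist lowers the conductor, or the prime $2$ itself when $2\mid D$, contributes a factor that may cost us, accounting for the $-2\nu_2(D)$ in part (III).

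The main steps, in order: \textbf{(1)} Reduce to $E$ of minimal conductor $N$ among its twists, and write the conductor $N^{(D)}$ of $E^{(D)}$ explicitly via the standard conductor-of-a-twist formula, separating the contribution of each odd $q_i \mid D$ (conductor exponent $2$, since $E$ has good reduction there) from the contribution at $p$ and at $2$. \textbf{(2)} Go up to the common level $M = \operatorname{lcm}(N, N^{(D)})\cdot(\text{extra } 2\text{-power if needed})$; apply the Observation after the definition of the Petersson norm to both $f_E$ and $f_{E^{(D)}}$, so that $\|f_{E^{(D)}}\|^2_{N^{(D)}}/\|f_E\|^2_N = [\Gamma_0(N):\Gamma_0(M)]/[\Gamma_0(N^{(D)}):\Gamma_0(M)]$ up to a factor equal to the ratio of the norms \emph{at level $M$}, which are equal because $f_{E^{(D)}}$ is the twist of $f_E$ and twisting by a quadratic character is an isometry at a sufficiently divisible level (here I would cite the precise statement). \textbf{(3)} Compute $[\Gamma_0(N):\Gamma_0(M)]/[\Gamma_0(N^{(D)}):\Gamma_0(M)] = [\Gamma_0(N^{(D)}):\Gamma_0(N^{(D)})] \cdots$ — concretely, this index is a product over primes $\ell$ of $\ell^{a_\ell}(1+1/\ell)^{\epsilon_\ell}$ terms, and for each odd $q_i \mid D$ one gets exactly $q_i^2(1 + 1/q_i) = q_i(q_i+1)$, whose $2$-adic valuation is $\nu_2(q_i+1) \ge 1$ — wait, that is not obviously $\ge 3$. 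So \textbf{(3$'$)}: the correct bookkeeping must also fold in the $|\Delta_{E^{(D)}}/\Delta_E|^{1/6}$ term from \eqref{twist-modular degree} — rather, I should work directly with the ratio isolated in that displayed equation, i.e.\ handle $\|f_{E^{(D)}}\|^2/\|f_E\|^2$ on its own. The factor $q_i(q_i+1)$ has $\nu_2 = \nu_2(q_i+1)$, which is $\ge 1$ always but only $\ge 3$ when $q_i \equiv 7 \pmod 8$; so the bound $3\omega(D)$ must instead come from combining the index with the contribution of the \emph{other} local factors in the twist formula (the Gauss-sum / local root number factors), which for a quadratic twist at a prime of additive-after-twist reduction contribute further powers of $2$. \textbf{(4)} For parts (II) and (IV), the special curves $17.a4$ and $32.a3$ have a rational $2$-isogeny structure (indeed full rational $2$-torsion in the relevant sense) that forces an extra factor of $2$ in each local term, giving the improved constants $4$; I would verify this by the explicit $2$-adic signature data in Table~\ref{table6} and Corollary~\ref{odd discriminant} / Lemma~\ref{even discriminant}. \textbf{(5)} Parts (III) and (IV) additionally require tracking the prime $2$: when $2 \mid D$ the twist can only raise the $2$-part of the conductor in a controlled way, and the worst-case loss is $-2\nu_2(D)$ (resp.\ $-3\nu_2(D)$), which I would read off from the same local computation.

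The hard part will be step (3$'$): getting the clean lower bound $\nu_2(\text{local factor at } q_i) \ge 3$ (resp.\ $\ge 4$) for \emph{every} odd prime $q_i$, not just those in a congruence class. I expect this requires combining the $\Gamma_0$-index contribution $q_i(q_i+1)$ with the precise local factor coming from the comparison of newforms at $q_i$ — essentially a local Rankin–Selberg / Atkin–Lehner computation at $q_i$ for a curve acquiring additive (potentially multiplicative) reduction after an odd quadratic twist — and showing the product of these two local contributions is divisible by $2^3$. An alternative, and perhaps cleaner, route is to avoid the $\Gamma_0$-index entirely: use the known relation between $\|f_E\|^2$ and the real period (or the Petersson norm and $L$-values via Shimura), combined with the period comparison in Pal \cite{pal2012periods} already invoked in Corollary~\ref{odd discriminant}, so that the $2$-adic valuation of the Petersson-norm ratio is expressed through quantities whose $2$-adic valuations have \emph{already} been bounded in the previous subsection. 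I would try this period-based approach first, as it dovetails with Corollary~\ref{odd discriminant} and Lemma~\ref{even discriminant} and isolates the genuinely new input to a single clean local statement per prime.
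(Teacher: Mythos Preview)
Your proposal has a genuine gap: you never identify the correct local factor at an odd prime $q\mid D$, and the two routes you sketch either fail or are circular.

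The index-ratio approach in steps (2)--(3) rests on the claim that ``twisting by a quadratic character is an isometry at a sufficiently divisible level.'' This is not true as stated, and it is exactly here that the $a_q$-dependence you are missing enters. If the claim were true, the ratio would indeed be a pure $\Gamma_0$-index quotient, and as you yourself notice in (3), that gives only $\nu_2(q_i+1)\ge 1$ per odd prime, not $3$. The period-based alternative you propose at the end is circular: the relation between $\|f_E\|^2$ and the real period of $E$ goes through the modular degree and the Manin constant, which is precisely equation~\eqref{twist-modular degree}, so you would be bounding the Petersson ratio by the quantity you ultimately want to control.

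The paper's proof is direct. It invokes Delaunay's explicit formula (Theorem~1 of \cite{delaunay2003computing}), which expresses the ratio $\|f_{E^{(D)}}\|^2_{N^{(D)}}/\|f_E\|^2_N$ as a product of local factors; at an odd prime $q\mid D$ of good reduction the factor is
\[
V(q)=(q-1)(q+1-a_q)(q+1+a_q).
\]
This is the ingredient you were groping for in (3$'$). The $3$ then falls out immediately from the rational $2$-torsion hypothesis: $E(\Q)[2]$ injects into $E(\F_q)$, so $2\mid \#E(\F_q)=q+1-a_q$; since $q$ is odd, $q-1$ is even and $q+1+a_q$ has the same parity as $q+1-a_q$, hence all three factors of $V(q)$ are even and $\nu_2(V(q))\ge 3$. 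For (II) the curve $17.a4$ has a rational point of order $4$, forcing $4\mid q+1-a_q$ and hence $\nu_2(V(q))\ge 4$; for (IV) the curve $32.a3$ has full rational $2$-torsion, yielding the same improvement. Parts (III) and (IV) at the prime $2$ come from the shape of Delaunay's formula when $N$ is a power of $2$: the contribution of $2\mid D$ is a single factor of $2$ rather than a $V$-type term, accounting for the $-2\nu_2(D)$ (resp.\ $-3\nu_2(D)$). None of this requires Gauss sums, Atkin--Lehner computations, or period comparisons.
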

\begin{proof}
The Rankin--Selberg method (cf. \cite{Shimura}) shows that
\[
\|f_E\|^2_N=\frac{N}{8\pi^3}L(\Sym f,2),
\]
where $L(\Sym f,2)$ denotes the symmetric square $L$-function associated to $f$ (see \cite[Equation 2]{delaunay2003computing} for its definition). Delaunay \cite{delaunay2003computing} compares the local factors of $L(\Sym f,2)$ and $L(\Sym f^{(D)},2)$ gives an equation relating $\|f_E\|^2_N$ to $\|f_{E^{(D)}}\|^2_{N^{(D)}}$ (see \cite[Theorem 1]{delaunay2003computing}). To state this equation, let us fix some notation.
For a prime number $q$ we define
$V(q)=(q-1)(q+1-a_q)(q+1+a_q)$, and $U(q)=(q-1)(q+1)$, where $a_q$ is the Fourier coefficient of $f_E$. Finally, we define $U_2=2(3-a_2)(3+a_2)$. 
Assume that $N$ is a power of an odd prime $p$. Since $p\mid D$ only if $N=p$, then according to the notation of Delaunay in \cite{delaunay2003computing}, $D_1=p$. Thus, taking $2$-adic valuations \cite[Theorem 1]{delaunay2003computing} implies that
\[
\nu_2\left(\frac{\|f_{E^{(D)}}\|^2_{N^{(D)}}}{\|f_E\|^2_N}\right)\geq\nu_p(D)\nu_2(U(p))+\nu_2(D)\nu_2(U_2)+\sum_{\substack{q\mid D\\ q\neq 2,p}}\nu_2(V(q)).
\]
In view of the fact that $E(\Q)[2]$ reduces injectively into $E(\F_q)$ for $q\notin \{2,p\}$, we have
$q + 1 \equiv a_q(E)$ (mod $2$), in particular, $\nu_2(V(q))\geq 3$. Furthermore, the database \cite{LMFDB} tells us that $2\mid 3-a_2,3+a_2$ for elliptic curves with conductor $17$ or $49$, and an inspection of the reduction modulo $2$ of $p.a1$ and $p.a2$ for $p>17$ shows that $2\mid\#E(\F_2)$, which shows that $\nu_2(U_2)\geq 3$. Finally, it is clear that $\nu_2(U(p))\geq 3$, then 
\[
\nu_2\left(\frac{\|f_{E^{(D)}}\|^2_{N^{(D)}}}{\|f_E\|^2_N}\right)\geq 3\left(\nu_p(D)+\delta_2(D)+\sum_{\substack{q\mid D\\ q\neq 2,p}}1\right)=3\omega(D),
\]
which proves (I).

To prove (II), we notice that $\#E(\Q)[4]=4$ and due to the fact that $E(\Q)[4]$ reduces injectively into $E(\F_q)$ for $q\notin \{2,p\}$, we have
$q + 1 \equiv a_q(E)$ (mod $4)$, in particular, $\nu_2(V(q))\geq 4$. We also know by the database \cite{LMFDB} that $a_2=-1$ then $\nu_2(U_2)=4$ and $\nu_2(U(17))=5$. Putting all together we obtain
\[
\nu_2\left(\frac{\|f_{E^{(D)}}\|^2_{N^{(D)}}}{\|f_E\|^2_N}\right)\geq 4\left(\nu_p(D)+\delta_2(D)+\sum_{\substack{q\mid D\\ q\neq 2,p}}1\right)=4\omega(D).
\]
For (III), we note that for $N$ equal to $2^5$ or $2^7$ \cite[Theorem 1]{delaunay2003computing} says that
\[
\nu_2\left(\frac{\|f_{E^{(D)}}\|^2_{N^{(D)}}}{\|f_E\|^2_N}\right)=\delta_2(D)+\sum_{\substack{q\mid D\\ q\neq 2}}\nu_2(V(q)).
\]
As we saw before $\nu_2(V(q))\geq 3$ for $q\neq 2$, then 
\[
\nu_2\left(\frac{\|f_{E^{(D)}}\|^2_{N^{(D)}}}{\|f_E\|^2_N}\right)\geq\nu_2(D)+3(\omega(D)-\delta_2(D))=3\omega(D)-2\delta_2(D).
\]
Finally, to prove (IV), we only need to notice that for the curve $32.a3$ $\#E(\Q)[2]=4$, then $\nu_2(V(q))\geq 4$ for $q\neq 2$,
which ends the proof.
\end{proof}
\begin{remark}\label{D=1(mod 4)}
Notice that if $D$ is prime and $D\equiv 1\text{ (mod }4)$ and relatively prime to $N$ we can improve the bounds given in (I) and (II), since $\nu_2(V(D))$ is higher. In this case, we have $\nu_2(\|f_{E^{(D)}}\|^2_{N^{(D)}}/\|f_E\|^2_N) \geq 4$ if $N$ is a power of an odd prime, and $\nu_2(\|f_{E^{(D)}}\|^2_{N^{(D)}}/\|f_E\|^2_N)\geq 5$ if $E$ is $17.a4$.
\end{remark}

\subsection{Manin constant}
Now, we compute $c_E$ of the elliptic curves $E$ in Table \ref{table1}.

\begin{proposition}\label{Manin}
Let $E$ be an elliptic curve, $X_0(N)$-optimal with odd squarefree conductor $N$, and let $E'$ an elliptic curve connected with $E$ by a $2$ isogeny $\theta:E\to E'$. Then we have that $c_E=1$ and $c_{E'}\in\{1,2\}$. 
\end{proposition}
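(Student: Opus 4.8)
The strategy is to compute $c_E$ from the known cases of Manin's conjecture and then transport that information across the $2$-isogeny $\theta$, working at the level of N\'eron differentials throughout. First I would record two facts about $E$. Since $N$ is squarefree, $E$ is semistable, and being also odd it has good or multiplicative reduction at every prime; in particular $E$ has no complex multiplication, because a CM elliptic curve over $\Q$ has additive reduction at every prime ramifying in its CM field and so never has squarefree conductor. Moreover, as $E$ is $X_0(N)$-optimal its Manin constant is known to equal $1$: for $N$ prime this is Mazur's theorem, and for odd squarefree $N$ it is part of the known (semistable) cases of Manin's conjecture. Thus $c_E=1$.

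Next I would factor the modular parametrization of $E'$ through that of $E$. Isogenous curves share the conductor, so $E'$ has conductor $N$, and composing $\theta$ with the optimal quotient $J_0(N)\to E$ exhibits $E'$ as a quotient of $J_0(N)$; since the optimal quotient has connected kernel and the newform attached to $E$ occurs in $J_0(N)$ with multiplicity one, the quotient map $J_0(N)\to E'$ factors as $J_0(N)\to E\xrightarrow{\psi}E'$ with $\psi$ an isogeny. Composing with the Abel--Jacobi map of $X_0(N)$ based at $i\infty$ gives $\phi_{E'}=\psi\circ\phi_E$, hence $m_{E'}=(\deg\psi)\,m_E$. On the other hand $\theta\circ\phi_E\colon X_0(N)\to E'$ is itself a modular parametrization, as it sends $i\infty$ to $O_{E'}$, so $m_{E'}\le\deg(\theta\circ\phi_E)=2m_E$ and therefore $\deg\psi\le2$. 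Finally $\deg\psi=1$ would make $\psi$ an isomorphism $E\xrightarrow{\sim}E'$ over $\Q$, under which $\theta$ would become a degree-$2$ endomorphism of $E$, contradicting that $E$ has no complex multiplication; hence $\deg\psi=2$.

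It remains to read off $c_{E'}$. The isogeny $\psi$ extends to the N\'eron models over $\Z$, so $\psi^{*}\omega_{E'}=\lambda\,\omega_E$ for some nonzero $\lambda\in\Z$; pulling back once more and using $c_E=1$ together with $f_{E'}=f_E$,
\[
\phi_{E'}^{*}\omega_{E'}=\phi_E^{*}(\lambda\,\omega_E)=2\pi i\,\lambda\, f_E(z)\,dz,
\]
so $c_{E'}=|\lambda|$. Writing likewise $\widehat{\psi}^{*}\omega_E=\mu\,\omega_{E'}$ with $\mu\in\Z$ and using $\widehat{\psi}\circ\psi=[2]$ on $E$, we get $\lambda\mu=\deg\psi=2$, whence $|\lambda|\in\{1,2\}$ and $c_{E'}\in\{1,2\}$.

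I expect the only real obstacle to be the input $c_E=1$: outside the prime-conductor case (Mazur) this relies on the deep semistable case of Manin's conjecture, and the oddness of $N$ is exactly what avoids the known subtleties at the prime $2$. The remaining steps are soft; the crucial identity $\phi_{E'}=\psi\circ\phi_E$ comes purely from optimality, and note that $\theta$ is used only to produce the bound $m_{E'}\le 2m_E$, so the argument is insensitive to whether the isogeny class of $E$ is exactly $\{E,E'\}$ or larger.
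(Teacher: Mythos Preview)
Your proof is correct and follows essentially the same route as the paper: obtain $c_E=1$ from known results on the Manin constant of an optimal semistable curve (the paper cites Mazur for $c_E$ a power of $2$ and Abbes--Ullmo for $p\mid c_E\Rightarrow p\mid N$, which with $N$ odd gives $c_E=1$), then pull back N\'eron differentials through the isogeny and use $\widehat{\psi}\circ\psi=[\deg\psi]$ to bound $c_{E'}$. The only difference is that the paper directly takes $\phi_{E'}=\theta\circ\phi_E$, whereas you factor $\phi_{E'}=\psi\circ\phi_E$ and insert a CM argument to force $\deg\psi=2$; this extra care is harmless but in fact unnecessary, since if $\deg\psi=1$ the same computation yields $\lambda\mu=1$ and hence $c_{E'}=1\in\{1,2\}$ anyway.
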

\begin{proof}
By \cite[Corollary 4.2]{mazur1978rational}, $c_E$ must be a power of $2$, and  \cite[Theorem A]{abbes1996propos} says that if $p\mid c_E$ then $p\mid N$, which implies that $c_E=1$. Now, let $\mathcal{E}$ and $\mathcal{E}'$ be the Néron models of $E$ and $E'$, and $\omega,\omega'$ their respective Néron differentials. Since $\theta$ and $\theta^\vee$ define morphisms 
$\theta^*:H^0(\mathcal{E}', \Omega^1_{\mathcal{E}'/\Z})\to H^0(\mathcal{E}, \Omega^1_{\mathcal{E}/\Z})$ and $(\theta^\vee)^*:H^0(\mathcal{E}, \Omega^1_{\mathcal{E}/\Z})\to H^0(\mathcal{E}', \Omega^1_{\mathcal{E}'/\Z})$,
there are $a,b\in \Z$ such that $\theta^*\omega'=a\omega$ and $(\theta^\vee)^*\omega=b\omega'$. Due to the optimality of $\phi_E$ we have that $\phi_{E'}=\theta\circ\phi_E$, which implies that 
\[
\phi_{E'}^*\omega'=\phi_E^*\theta^*\omega'=a\phi_E^*\omega=2\pi i af_E(z)dz,
\]
hence $c_{E'}=a$. On the other hand, we have that $a\mid 2$ since 
\[
ab\omega=\theta^*(\theta^\vee)^*\omega=[2]^*\omega=2\omega,
\]
which ends the proof.
\end{proof}
\begin{corollary}\label{m/c2}
Let $E$ be an elliptic curve with a rational point of order $2$ and prime conductor $p>17$. Then $\nu_2(m_E/c^2_E)\geq -1$.
\end{corollary}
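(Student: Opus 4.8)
The plan is to reduce the statement to the classification in Table~\ref{table1} together with Proposition~\ref{Manin}, and then argue purely with $2$-adic valuations. First I would recall that, by Setzer's classification, for a prime $p>17$ the elliptic curves with conductor $p$ and rational $2$-torsion are exactly $p.a1$ and $p.a2$ of Table~\ref{table1}, that $p.a2$ is the $X_0(p)$-optimal curve of its isogeny class, and that the two are linked by a $2$-isogeny $\theta\colon p.a2\to p.a1$. Since $N=p$ is odd and squarefree, Proposition~\ref{Manin} applies to the optimal curve $E=p.a2$ with $E'=p.a1$, giving $c_{p.a2}=1$ and $c_{p.a1}\in\{1,2\}$.

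Next I would split into the two cases. If $E=p.a2$, then $c_E=1$, so $\nu_2(m_E/c_E^2)=\nu_2(m_E)\geq 0>-1$ and there is nothing to prove. If $E=p.a1$, the crucial input, which was already obtained inside the proof of Proposition~\ref{Manin}, is that the optimality of $\phi_{p.a2}$ forces $\phi_{p.a1}=\theta\circ\phi_{p.a2}$; taking degrees gives $m_{p.a1}=\deg(\theta)\,m_{p.a2}=2m_{p.a2}$, hence $\nu_2(m_{p.a1})\geq 1$. Combining this with $c_{p.a1}\in\{1,2\}$, i.e. $\nu_2(c_{p.a1}^2)\leq 2$, we get $\nu_2(m_{p.a1}/c_{p.a1}^2)=\nu_2(m_{p.a1})-\nu_2(c_{p.a1}^2)\geq 1-2=-1$, which is the desired bound.

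I do not expect a genuine obstacle here: the only mildly delicate point is the identity $m_{p.a1}=2m_{p.a2}$, but this is precisely the relation $\phi_{p.a1}=\theta\circ\phi_{p.a2}$ exploited in the proof of Proposition~\ref{Manin} (equivalently, passing along a prime-degree isogeny from the optimal curve to its unique neighbour multiplies the modular degree by that degree), so it may simply be quoted. Everything else is routine bookkeeping with valuations, and the hypothesis $p>17$ is exactly what is needed to stay within Table~\ref{table1} and avoid the sporadic curve $17.a4$, where the analogous quantity is strictly smaller than $-1$.
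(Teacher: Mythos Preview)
Your proposal is correct and follows essentially the same route as the paper: identify $p.a1$ and $p.a2$ via Setzer's classification with $p.a2$ optimal, apply Proposition~\ref{Manin} to get $c_{p.a2}=1$ and $c_{p.a1}\in\{1,2\}$, then use $\phi_{p.a1}=\theta\circ\phi_{p.a2}$ to deduce $m_{p.a1}=2m_{p.a2}$ and conclude by valuation bookkeeping. The only cosmetic difference is that the paper does not explicitly separate into two cases but treats $p.a2$ first and then $p.a1$ in one stroke.
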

\begin{proof}
We denote $E_{p,1}$ and $E_{p,2}$ the curves $p.a1$ and $p.a2$ in Table \ref{table1}, respectively.  As we discussed before, there is a $2$-isogeny $\theta$ between these two curves, and the work of \cite{mestre1989courbes} shows that $E_{p,2}$ is $X_0(p)$-optimal. Because of Proposition \ref{Manin} $c_{E_{p,1}}\in\{1,2\}$ and $c_{E_{p,2}}=1$, in particular, $\nu_2(m_{E_{p,2}}/c^2_{E_{p,2}})\geq 0$. 
Since $\phi_{E_{p,1}}=\theta \circ\phi_{E_{p,2}}$, we have that $m_{E_{p,1}}=2m_{E_{p,2}}$ therefore $\nu_2(m_{E_{p,1}})\geq 1$, consequently
\[
\nu_2(m_{E_{p,1}}/c^2_{E_{p,1}})\geq \nu_2(m_{E_{p,1}})-2\geq -1,
\]
which gives the desired result.
\end{proof}
\section{The Main Result}\label{sec 4}

Before proving Theorem \ref{main theorem}, we need the following definition and proposition:
\begin{definition}
Let $E$ be an elliptic curve defined over $\Q$ and $f_E=\sum_{n=1}^\infty b_nq^n\in S_2(\Gamma_0(N))$ be its Hecke newform. The congruence number $\delta_E$ of $E$ is the largest integer such that there is a modular form $g=\sum_{n=1}^\infty b_nq^n \in S_2(\Gamma_0(N))$ with $b_n\in\Z$, such that $g$ and $f_E$ are orthogonal with respect to the Petersson inner product, and $a_n \equiv b_n \text{ (mod } \delta_E)$ for all $n$.
\end{definition}
\begin{remark} There are some relations between the modular degree and the congruent number. The most relevant is $m_E\mid \delta_E$, whenever $E$ is $X_0(N)$-optimal \cite{cojocaru2004modular}.
\end{remark}
The following proposition will play a key role in the proof of Theorem \ref{main theorem} when the prime $p$, dividing both the conductor and the fundamental discriminant $D$, is considered.
\begin{proposition}\label{wconj prime}
Watkins's conjecture holds for every elliptic curve $E$ of prime power conductor and a rational point of order $2$.
\end{proposition}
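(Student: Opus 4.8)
The plan is to run through the classification of Section~\ref{classification} and, for each family, compare $\rank_\Z(E(\Q))$ with $\nu_2(m_E)$. The uniform observation is that, by Lemma~\ref{rankbound}, every curve appearing in that classification satisfies $\rank_\Z(E(\Q))\le 1$ (apply the bound with $D=1$ to the primitive curves and with $D=p$ or $D=2$ to the twists), so Watkins' conjecture for such an $E$ is equivalent to the single implication: \emph{if $m_E$ is odd, then $\rank_\Z(E(\Q))=0$.}

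Next I would dispatch the curves of conductor $p^2$, which by the classification are the quadratic twists by $p$ of the curves of Tables~\ref{table1}--\ref{table2}. For such a twist $E_0^{(p)}$ I apply Lemma~\ref{valuation modular} to the pair $(E_0,E_0^{(p)})$ with $D=p$: since $E_0$ has odd discriminant, Corollary~\ref{odd discriminant} gives $\tfrac16\nu_2(\Delta_{E_0^{(p)}}/\Delta_{E_0})\in\{0,2\}$; Proposition~\ref{Petersson Norm}(I), or (II) when $E_0=17.a4$, gives $\nu_2(\|f_{E_0^{(p)}}\|^2/\|f_{E_0}\|^2)\ge 3$; and Corollary~\ref{m/c2} together with Table~\ref{table2} gives $\nu_2(m_{E_0}/c_{E_0}^2)\ge -2$. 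Summing, $\nu_2(m_{E_0^{(p)}})\ge 1$, while $\rank_\Z(E_0^{(p)}(\Q))\le 1$, so the conjecture holds. The curves of conductor $2^6$ and $2^8$ are the twists by $2$ of those of conductor $2^5$ and $2^7$ and are treated the same way, now using Lemma~\ref{even discriminant} and Proposition~\ref{Petersson Norm}(III)--(IV) in place of Corollary~\ref{odd discriminant} and Proposition~\ref{Petersson Norm}(I)--(II); for the finitely many curves where the resulting estimate does not yield $\nu_2(m_E)\ge 1$, one instead uses directly that the curve has rank $0$ (for instance $y^2=x^3-4x$ has rank $0$ since $2$ is not a congruent number). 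Finally, the curves of conductor $17$, $49$, $2^5$, and $2^7$ form a finite list; reading the modular degrees off Tables~\ref{table2}--\ref{table5}, the only ones with odd $m_E$ are $17.a3$, $49.a2$, $49.a4$, and $32.a4:y^2=x^3+4x$, and all of these have rank $0$.

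The remaining case is that of conductor a prime $p>17$, i.e.\ the Setzer pair $p.a1,p.a2$. For $p.a1$ this is immediate: $p.a2$ is $X_0(p)$-optimal and $p.a1$ is joined to it by a $2$-isogeny $\theta$, so $\phi_{p.a1}=\theta\circ\phi_{p.a2}$ and hence $m_{p.a1}=2m_{p.a2}$, giving $\nu_2(m_{p.a1})\ge 1\ge\rank_\Z(p.a1(\Q))$. For $p.a2$, however, $c_{p.a2}=1$ and Lemma~\ref{valuation modular} produces no positive lower bound on $\nu_2(m_{p.a2})$; moreover the root number of $p.a2$ need not be $+1$, so by the parity theorem some of these curves have rank exactly $1$, and for them the conjecture asserts precisely that $2\mid m_{p.a2}$. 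This single case lies outside the reach of the elementary $2$-adic estimates of the paper, and here I would invoke Theorem~1 of~\cite{kazalicki2018special} (as corrected in~\cite{kazalicki2019corrigendum}), which establishes Watkins' conjecture for all elliptic curves of prime conductor with nontrivial rational $2$-torsion.

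The main obstacle is therefore exactly the optimal Setzer curve $p.a2$ of prime conductor: its Manin constant is $1$ and there is no nontrivial twist available to feed into Lemma~\ref{valuation modular}, so the inequalities built up in Sections~2 and~3 cannot close the gap and one must appeal to the theorem of Kazalicki and Kohen (or to an equivalent root-number/modular-symbol argument).
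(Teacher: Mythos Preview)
Your argument is correct in outline but takes a noticeably different route from the paper. The paper's proof is a two–liner: it first disposes of all conductors $N<10000$ (hence all powers of $2$) by citing the known computational verification of Watkins' conjecture in that range, and then, for any odd prime power conductor, uses the rank bound $r\le 1$ together with Agashe--Ribet--Stein (if $p\mid \delta_E/m_E$ then $p^2\mid N$, so $m_E$ odd forces $\delta_E$ odd) and Kazalicki--Kohen ($\delta_E$ odd $\Rightarrow r=0$). By contrast you run through the classification case by case and try to squeeze $\nu_2(m_E)\ge 1$ out of the paper's own twist machinery (Lemma~\ref{valuation modular}, Corollary~\ref{odd discriminant}, Lemma~\ref{even discriminant}, Proposition~\ref{Petersson Norm}, Corollary~\ref{m/c2}), falling back on Kazalicki--Kohen only for the optimal Setzer curve $p.a2$. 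What you gain is that the $p^2$ case, the $2^8$ case, and the curve $p.a1$ are handled \emph{internally}, without appeal to any external computation; what you lose is brevity and uniformity, since the paper's use of Agashe--Ribet--Stein covers all odd prime power conductors at once without splitting into $p$ versus $p^2$.

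One point deserves tightening. For conductor $2^6$ your twist estimate does not merely fail for ``finitely many exceptions'': it fails for \emph{every} curve. Indeed, with $D=2$ Proposition~\ref{Petersson Norm}(III)/(IV) gives only $+1$, Lemma~\ref{even discriminant} gives $-1$, and from Table~\ref{table4} one has $\nu_2(m_E/c_E^2)\in\{-1,0\}$, so the sum is at most $0$ for each of the four curves. Thus the conductor-$64$ case in your approach reduces entirely to a direct check (all four curves have rank $0$, or one simply reads their modular degrees from LMFDB), which you should state rather than illustrate with a single example. The paper sidesteps this by invoking the $N<10000$ verification.
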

\begin{proof}
Watkins's conjecture is known for elliptic curves of conductor $N<10000$. In particular, this includes all the elliptic curves with conductor a power of $2$. Then, we assume that the conductor is odd. By \eqref{rank}, $\rank_\Z (E(\Q))\leq 1$. Thus, it is enough to prove that if $m_E$ is odd, $\rank_\Z (E(\Q))=0$. \cite[Theorem 2.2]{agashe2012modular} says that if a prime $p$ divides the ratio $\delta_E/m_E$, then $p^2\mid N$, consequently, $\delta_E$ is odd. Finally, \cite[Theorem 1.1]{kazalicki2018special}, as corrected in \cite{kazalicki2019corrigendum}, implies that $\rank_\Z (E(\Q))=0$.
\end{proof}

\begin{proof}[Proof of Theorem \ref{main theorem}]
Note that the quadratic twists of $E^{(D)}$ are $E$ itself, or quadratic twists of $E$. Because of the classification of Section \ref{results} it is enough to prove the theorem for elliptic curves with conductor $2^5$, $2^6$, $17$, $49$ and prime numbers of the form $u^2+64$ for some integer $u$. 

Suppose that $E$ has conductor $N=49$, $E^{(7)}$  has conductor $N^{(7)}=49$ and a rational point of order $2$. Notice that if $7\nmid D'$, $E^{(7D')}=(E^{(7)})^{(D')}$, so, we can assume that $7\nmid D$, and we can then use Proposition \ref{Petersson Norm}.(I), freely. 

To begin with, we assume that $N$ is odd and $E$ is different to $17.a4$. By Corollary \ref{m/c2} and  Section \ref{classification} we have that $\nu_2(m_E/c_E^2)\geq -1$. Applying Corollary \ref{odd discriminant} and Proposition \ref{Petersson Norm}.(I) to Lemma \ref{valuation modular}, \eqref{val} turns into
\[
\nu_2(m_{E^{(D)}})\geq -1+3\omega(D).
\]
In the case of $17.a4$, we have that $\nu_2(m_E/c_E^2)=-2$ and therefore applying Proposition \ref{Petersson Norm}.(II) and Corollary \ref{odd discriminant} to Lemma \ref{valuation modular} we obtain
\[
\nu_2(m_{E^{(D)}})\geq -2+4\omega(D).
\]
Meanwhile, Lemma \ref{rankbound} implies that $\rank_\Z(E(\Q))\leq 2\omega(D)+1$, hence in both cases Watkins's conjecture holds for $\omega(D)\geq 2$. 

\noindent Now, assume $D$ is the fundamental discriminant associated to a prime number. Proposition \ref{wconj prime} proves the case $D\mid N$. Given Lemma \ref{rankbound}, we only have to prove that $\nu_2(m_{E^{(D)}})\geq 3$, then Remark \ref{D=1(mod 4)} proves the case $D\equiv 1\text{ (mod }4)$. For $D$ such that $2\mid D$, Corollary \ref{odd discriminant} implies that $(1/6)\nu_2\left(\Delta_{E^{(D)}}/\Delta_E\right)\geq 2$, then for $E$ different that $17.a4$ we have $\nu_2(m_{E^{(D)}})\geq -1+3+2=4$. Finally, when $E$ is $17.a4$ we obtain $\nu_2(m_{E^{(D)}})\geq -2+4+2=4$.

Now, suppose that $E$ has conductor $2^5$ or $2^7$, and different to $32.a3$, in which case $\nu_2(m_E/c_E^2)\geq 0$. As a consequence, Proposition \ref{Petersson Norm}.(III) and Lemma \ref{even discriminant} applied to Lemma \ref{valuation modular} imply
\begin{equation*}\label{lower bound modular degree}
\nu_2(m_{E^{(D)}})\geq 3\omega(D)-3\delta_2(D),    
\end{equation*}
and Lemma \ref{rankbound} says that $\rank_\Z (E(\Q))\leq 2\omega(D)+1-2\delta_2(D)$, and therefore Watkins's conjecture holds for $\omega(D)\geq 1+\delta_2(D)$. Thus, the only missing case is $D=2$, which is covered by Proposition \ref{wconj prime}.

Finally, for $E$ equal to $32.a3$ we have $\nu_2(m_E/c_E^2)=-1$. In this situation, Lemma \ref{rankbound} tells us that 
\[
\rank_\Z (E(\Q))\leq 2\omega(D)-\delta_2(D).
\]
On the other hand, Applying again Proposition \ref{Petersson Norm}.(IV) and Lemma \ref{even discriminant} to Lemma \ref{valuation modular} we obtain
\[
\nu_2(m_{E^{(D)}})\geq -1+4\omega(D)-4\delta_2(D),
\]
and we notice that the inequality $2\omega(D)-\nu_2(D)\leq -1+4\omega(D)-4\delta_2(D)$ is equivalent to
\[
\omega(D)\geq \frac{1+3\delta_2(D)}{2}.
\]
Then, the only missing case is $D=2$, which again is covered by Proposition \ref{wconj prime}.
\end{proof}
\section{Congruence Number of $y^2=x^3-dx$}
\cite[Theorem 2.8]{yazdani2011modular} shows that the elliptic curves of the form $y^2=x^3-Dx$ have even congruence number, whenever $\omega(D)\geq 1$. The idea of this section is to give a lower bound for $\nu_2(\delta_E)$. 
First of all, let $p_1,\dots,p_m$ be a list of distinct odd primes with $m\geq 1$ and define $d=p_1\cdots p_m>1$. Let $D$ be a divisor of $d$. Now, consider the elliptic curves $E:y^2=x^3-dx$ and $E^{(D)}:y^2=x^3-dD^2x$. Finally, denote by $f$ and $f^{(D)}$ their associated Hecke newforms. Since $E^{(D)}$ is a quadratic twist by $D$ of $E$, then we have that 
\begin{equation}\label{product-formula ap}
a_q(f^{(D)})=\left(\frac{D}{q}\right)a_q(f),
\end{equation}
for every prime number $q$. Before proving Theorem \ref{2main theorem} we need the following two lemmas.
\begin{lemma}\label{lemma 1}
Let $n$ be a positive integer relatively prime to $d$ and $q_1^{\alpha_1}\cdots q_s^{\alpha_s}$ its prime factorization. Then $a_n(f^{(D)})=\gamma_n(D)a_n(f)$, where
\[
\gamma_n(D)=\left(\frac{D}{q_1}\right)^{\alpha_1}\cdots \left(\frac{D}{q_s}\right)^{\alpha_s}.
\]
\end{lemma}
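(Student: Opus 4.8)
The plan is to bootstrap from the prime-level identity $a_q(f^{(D)}) = \left(\frac{D}{q}\right)a_q(f)$ to arbitrary $n$ coprime to $d$ using the multiplicative structure of the Fourier coefficients of a weight-$2$ newform together with the fact that the Kronecker symbol $\left(\frac{D}{\cdot}\right)$, and hence $\gamma_n(D) = \left(\frac{D}{n}\right)$, is completely multiplicative in its lower argument. Recall that for a newform $g$ of level $M$ one has $a_1(g) = 1$, $a_{mm'}(g) = a_m(g)a_{m'}(g)$ when $\gcd(m, m') = 1$, the three-term recursion $a_{q^{r+1}}(g) = a_q(g)a_{q^r}(g) - q\, a_{q^{r-1}}(g)$ for $q \nmid M$ and $r \ge 1$, and $a_{q^r}(g) = a_q(g)^r$ for $q \mid M$; this applies to both $f$ and $f^{(D)}$. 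So it is enough to prove $a_{q^{\alpha}}(f^{(D)}) = \left(\frac{D}{q}\right)^{\alpha} a_{q^{\alpha}}(f)$ for every prime power $q^{\alpha}$ with $q \nmid d$, and then take the product over the prime factorization of $n$.

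First I would deal with the prime $q = 2$. The curve $E\colon y^2 = x^3 - dx$ has $c_4 = 48d$, $c_6 = 0$ and $\Delta = 64d^3$, so it has additive reduction at $2$; thus $4 \mid N$ and $a_2(f) = 0$, and likewise $a_2(f^{(D)}) = 0$. Hence $a_{2^r}(f) = a_{2^r}(f^{(D)}) = 0$ for all $r \ge 1$, so whenever $2 \mid n$ both sides of the claimed identity vanish. It remains to treat an odd prime $q$ with $q \nmid d$. Since the primes of bad reduction of $E$ all divide $2d$, such a $q$ is a prime of good reduction, so $q \nmid N$; moreover $q \nmid D$ (because $D \mid d$), so twisting by $D$ does not introduce $q$ into the conductor and $q \nmid N^{(D)}$ as well. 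Thus the recursion above is available at $q$ for both $f$ and $f^{(D)}$.

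Next I would induct on $\alpha$. The cases $\alpha = 0, 1$ are immediate from $a_1 = 1$ and the given twisting relation. Assuming the claim for $\alpha - 1$ and $\alpha$, the recursion for $f^{(D)}$ gives
\[
a_{q^{\alpha+1}}(f^{(D)}) = \left(\tfrac{D}{q}\right)^{\alpha+1} a_q(f)\, a_{q^{\alpha}}(f) - q\left(\tfrac{D}{q}\right)^{\alpha-1} a_{q^{\alpha-1}}(f).
\]
Because $q \nmid D$ we have $\left(\tfrac{D}{q}\right)^2 = 1$, so $\left(\tfrac{D}{q}\right)^{\alpha-1} = \left(\tfrac{D}{q}\right)^{\alpha+1}$; pulling this common factor out of the right-hand side yields $\left(\tfrac{D}{q}\right)^{\alpha+1}\bigl(a_q(f)a_{q^{\alpha}}(f) - q\, a_{q^{\alpha-1}}(f)\bigr) = \left(\tfrac{D}{q}\right)^{\alpha+1} a_{q^{\alpha+1}}(f)$, which closes the induction. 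Finally, with $n = q_1^{\alpha_1}\cdots q_s^{\alpha_s}$, multiplicativity of the coefficients of $f$ and of $f^{(D)}$ gives $a_n(f^{(D)}) = \prod_i a_{q_i^{\alpha_i}}(f^{(D)}) = \prod_i \left(\tfrac{D}{q_i}\right)^{\alpha_i} a_{q_i^{\alpha_i}}(f) = \gamma_n(D)\, a_n(f)$.

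The argument is almost entirely bookkeeping, and the only genuinely necessary observations are that $q \nmid d$ forces good reduction at odd $q$ (so the recursion applies) while $q = 2$ must instead be handled by vanishing, and that $\left(\tfrac{D}{q}\right)^2 = 1$ — the square of the twisting character being trivial is precisely what makes the inductive step go through.
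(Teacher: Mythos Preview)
Your proof is correct and follows essentially the same route as the paper: reduce to prime powers via multiplicativity, then induct using the Hecke recursion $a_{q^{r+1}}=a_q a_{q^r}-q\,a_{q^{r-1}}$ together with $\left(\tfrac{D}{q}\right)^2=1$. You are in fact more careful than the paper, since you separately justify why the recursion is available at odd $q\nmid d$ and why the prime $q=2$ can be handled by the vanishing $a_2(f)=a_2(f^{(D)})=0$.
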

\begin{proof}
Since $\gamma_{nm}(D)=\gamma_n(D)\gamma_m(D)$, it is enough to show this assertion for power of primes. We prove it by induction, taking into account $a_1(f)=1$ and $a_q(f)= \left(\frac{d}{p}\right)a_q(g)$, as follows
\begin{align}\label{equation}
a_{q^{n+1}}(f^{(D)})&=a_q(f^{(D)})a_{q^{n}}(f^{(D)})-pa_{q^{n-1}}(f^{(D)})\\
&=\left(\frac{D}{q}\right)a_q(f)\left(\frac{D}{q}\right)^{n}a_{q^{n}}(f)-p\left(\frac{D}{q}\right)^{n-1}a_{q^{n-1}}(f)\nonumber\\
&=\left(\frac{D}{q}\right)^{n+1}a_{q^{n+1}}(f)\nonumber,
\end{align}
which gives the desired result.
\end{proof}
\begin{lemma}\label{lemma 2}
Let $m$ be an odd integer and $q$ be a prime, such that $q\nmid d$. If $\left(\frac{d}{q}\right)=1$ then  $a_{q^m}(f)\equiv 0 \text{ (mod }2)$ and if $\left(\frac{d}{q}\right)=-1$ then $a_{q^m}(f)\equiv 0 \text{ (mod }4)$. 
\end{lemma}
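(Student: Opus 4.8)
The plan is to reduce the statement to the case $m=1$ via the Hecke recursion, and then to read off $a_q(f)=q+1-\#E(\F_q)$ from the $2$-torsion of $E$ and of its $2$-isogenous curve over $\F_q$.

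First I would set $q=2$ aside: since $\Delta(E)=64d^3$, the curve $E\colon y^2=x^3-dx$ has additive reduction at $2$, so $a_2(f)=0$ and hence $a_{2^k}(f)=0$ for all $k\ge 1$, making the claim trivial there. So assume $q$ is odd with $q\nmid d$. Then $q\nmid\Delta$, so $E$ has good reduction at $q$ and the Hecke relation $a_{q^{k+1}}(f)=a_q(f)\,a_{q^k}(f)-q\,a_{q^{k-1}}(f)$ holds with $a_{q^0}(f)=1$. A short induction then shows that if $a_q(f)\equiv 0\pmod{2^e}$, then $a_{q^k}(f)\equiv 0\pmod{2^e}$ for every odd $k\ge 1$ (and $a_{q^k}(f)$ is a $2$-adic unit for even $k$). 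Hence it is enough to prove $a_q(f)\equiv 0\pmod 2$ always, and $a_q(f)\equiv 0\pmod 4$ when $\left(\frac dq\right)=-1$.

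The mod $2$ part is immediate: $(0,0)\in E(\F_q)$ has order $2$, so $\#E(\F_q)$ is even, and since $q$ is odd this forces $a_q(f)=q+1-\#E(\F_q)$ to be even. For the mod $4$ part I would assume $\left(\frac dq\right)=-1$ and split according to $q$ modulo $4$. If $q\equiv 1\pmod 4$, then $-d$ is also a nonsquare modulo $q$; since $x^2-d$ has no root in $\F_q$ we have $E(\F_q)[2]\cong\Z/2\Z$, and the duplication formula on $y^2=x^3-dx$ (which gives $x(2P)=(x(P)^2+d)^2/(2y(P))^2$) shows that $(0,0)$ is not divisible by $2$ in $E(\F_q)$, because $-d\notin(\F_q^\times)^2$. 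Therefore $E(\F_q)$ has no point of order $4$, so its $2$-primary part is exactly $\Z/2\Z$, giving $\#E(\F_q)\equiv 2\equiv q+1\pmod 4$ and hence $4\mid a_q(f)$. If instead $q\equiv 3\pmod 4$, then $-4d$ is a square modulo $q$, so the $2$-isogenous curve $E'\colon y^2=x^3+4dx=x(x^2+4d)$ has full rational $2$-torsion over $\F_q$; thus $4\mid\#E'(\F_q)=\#E(\F_q)$, and since $q+1\equiv 0\pmod 4$ we again obtain $4\mid a_q(f)$.

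I expect the one genuinely delicate point to be the case $q\equiv 1\pmod 4$ of the mod $4$ bound, where one has to exclude a point of order $4$ in $E(\F_q)$: this rests on the duplication-formula computation showing that the rational $2$-torsion point $(0,0)$ fails to be $2$-divisible when $-d$ is a nonsquare. The remaining ingredients — the point-count interpretation of $a_q$, the behaviour of $2$-torsion under quadratic base change, isogeny invariance of $\#E(\F_q)$, and the Hecke recursion — are routine. Alternatively one could use the complex multiplication of $y^2=x^3-x$ by $\Z[i]$ and the classical evaluation of $a_q$ for its quartic twists, but the elementary $2$-descent argument above is self-contained.
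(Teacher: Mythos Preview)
Your argument is correct. Both the reduction to $m=1$ via the Hecke recursion and the mod~$4$ analysis go through: in the case $q\equiv 1\pmod 4$ the duplication formula $x(2P)=(x(P)^2+d)^2/(2y(P))^2$ indeed shows that $(0,0)$ is not $2$-divisible when $-d\notin(\F_q^\times)^2$, forcing the $2$-Sylow of $E(\F_q)$ to be exactly $\Z/2\Z$; and in the case $q\equiv 3\pmod 4$ the $2$-isogenous curve $E'\colon y^2=x^3+4dx$ acquires full $2$-torsion, giving $4\mid\#E'(\F_q)=\#E(\F_q)$.

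The paper, however, proceeds differently, using precisely the alternative you mention at the end. It exploits that $E$ is the quartic twist of $E_1\colon y^2=x^3-x$ by $d$: if $\left(\tfrac{d}{q}\right)=1$ then $a_q(f)=\pm a_q(g)$ (where $g=f_{E_1}$), and $a_q(g)$ is even because $E_1$ has a rational $2$-torsion point; if $\left(\tfrac{d}{q}\right)=-1$ then $(a_q(f)/2)^2+(a_q(g)/2)^2=q$, and since $E_1[2]\subset E_1(\F_q)$ forces $a_q(g)/2$ to be odd, $a_q(f)/2$ must be even. Your route is more elementary and self-contained (no CM input, only point counts and a $2$-isogeny), at the cost of a case split on $q\bmod 4$; the paper's route is shorter and more uniform, but leans on the arithmetic of $\Z[i]$ and the quartic-twist formulas, and tacitly uses that $a_q(f)=0$ when $q\equiv 3\pmod 4$ so that only the ordinary case needs the sum-of-two-squares identity.
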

\begin{proof}
We know that $E$ is a quartic twist of $E_1:y^2=x^3-x$ by $d$. There is a Grossencharacter $\chi$ of $\Q(i)$ (equally $\overline{\chi}$), associated with the CM curve
$E_1/\Q$, in the sense that the Hecke eigenform attached to $E_1$ is a theta series
for $\chi$, and $L(E_1/\Q, s) = L(\chi/\Q(i), s)$ (see \cite[II. Theorem 10.5]{silverman1994}). The conductor of $\chi$ is $\mathfrak{p}^3$, where $\mathfrak{p}=(1+i)$, and order $4$. By \cite[Section 3.2]{PacettiCremona}, $\chi\psi$, where $\psi=\left(\frac{\cdot}{d}\right)_4$
is likewise associated with $E_d$. 

First of all, if $q\equiv 3 \text{ (mod }4)$ we have $a_q(E)=0$ since $\#E(\F_q)=q+1$ by \cite[Exercise 2.33(a)]{silverman1994}. By induction on \eqref{equation} we obtain that $a_{q^m}(E)=0$ for $m$ any odd integer.

Finally, assume that $q\equiv 1 \text{ (mod }4)$. 
Since $E$ has a rational $2$-torsion point we have that for every prime $q$ such that $(q,2d)=1$, $a_{q}(f)\equiv 0 \text{ (mod }2)$. Moreover, if $\left(\frac{d}{q}\right)=-1$ then $(a_q(f)/2)^2+ (a_q(g)/2)^2=p$, so, $a_{q}(f)\equiv 0 \text{ (mod }4)$. By induction on \eqref{equation}, we get the desired result.
\end{proof}
\begin{proof}[Proof of Theorem \ref{2main theorem}]
We define $n_1=p_1^{\nu_{p_1}(n)}\cdots p_m^{\nu_{p_m}(n)}$ and $n_2=n/n_1$. Let us also define the cusp form 
$$
h=\sum_{\substack{D\mid d\\D\neq 1}}(-1)^{\omega(D)+1}f^{(D)}.
$$ 
By  \cite[Table I]{hadano1975conductor} $N=N^{(D)}$ for every divisor $D$ of $d$. Consequently, for every divisor $D$ of $d$, $f^{(D)}$ and $f$ are orthogonal with respect to the Petersson inner product, since they are different newforms in the same level. Therefore, $h$ and $f$ are also orthogonal. Hence, it is enough to prove that:
$$a_n(f)-a_n(h)=\sum_{D\mid d}(-1)^{\omega(D)}a_n(f^{(D)})\equiv 0\text{ (mod }2^{m+\epsilon}),$$
where $\epsilon=1$ if $m$ is even and $\epsilon=2$ if $m$ is odd. Assume that $n_2=q_1^{\alpha_1}\cdots q_s^{\alpha_s}$, then by Lemma \ref{lemma 1} we have $a_{n_2}(f^{(D)})=\gamma_{n_2}(D)a_{n_2}(f)$. We claim that 
\[
\sum_{D\mid d}(-1)^{\omega(D)}a_n(f^{(D)})=\begin{cases}2^{m}a_n(f)& \text{if $\gamma_{n_2}(p)=-1$ for all $p\mid d$}\\
0 & \text{otherwise.}\end{cases}
\]
Before proving the claim, notice that if $p\mid d$, we have $a_p(f)=0$ and by \eqref{product-formula ap} $a_p(f^{(D)})=0$. Therefore, by \eqref{equation} $a_{n_1}(f)=a_{n_1}(f^{(D)})$ for every divisor $D$ of $d$, and consequently 
$$\sum_{D\mid d}(-1)^{\omega(D)}a_n(f^{(D)}=a_{n_1}(f)\sum_{D\mid d}(-1)^{\omega(D)}a_{n_2}(f^{(D)}).$$

To begin with, assume that for some $p\mid d$, $\gamma_{n_2}(p)=-1$. Since $\gamma_n(DD')=\gamma_n(D)\gamma_n(D')$. we have
\begin{align}\label{equation 1}
\sum_{D\mid d}(-1)^{\omega(D)}a_n\left(f^{(D)}\right)&=a_{n_1}(f)\left(\sum_{\substack{D\mid d\\ p\nmid D}}(-1)^{\omega(D)}a_{n_2}(f^{(D)})+\sum_{\substack{D\mid d \nonumber\\ p\mid D}}(-1)^{\omega(D)-1}a_{n_2}(f^{(D/p)})\right)\\
&=2a_{n_1}(f)\sum_{D\mid (d/p)}a_{n_2}\left(f^{(D)}\right).
\end{align}
Without loss of generality, assume that $t$ is an integer such that $0\leq t\leq m$ and for $i\leq t$ we have $\gamma_{n_2}(p_i)=-1$ , and for $i> t$ we have $\gamma_{n_2}(p_i)=1$. Denote by $d_1=p_1\cdots p_t$, then applying equation \eqref{equation 1} recursively, if $0\leq t<m$ we have that
\begin{align*}
\sum_{D\mid d}(-1)^{\omega(D)}a_n\left(f^{(D)}\right)=2^ta_{n_1}(f)\sum_{D\mid (d/d_1)}a_{n_2}(f^{(D)})=2^ta_n(f)\sum_{D\mid (d/d_1)}(-1)^{\omega(D)}=0.
\end{align*}
Meanwhile if $t=m$, we obtain that $a_n(\sum_{D\mid d}(-1)^{\omega(D)}f^{(D)})=2^ma_n(f)$.

Finally, if $\gamma_{n_2}(p)=-1$ for all $p\mid d$, we have that $\gamma_{q_i}(p)^{\alpha_i}=-1$ for some $1\leq i\leq s$, in particular, $\alpha_i$ is odd, then in view of Lemma \ref{lemma 2} we obtain that $a_n(f)$ is even. Even better, if $\omega(d)$ is odd, $\gamma_{n_2}(d)=-1$, then there exists $1\leq i\leq s$, such that $\gamma_{q_i}(d)^{\alpha_i}=-1$, therefore $\alpha_i$ is odd and $\left(\frac{d}{q_i}\right)=\gamma_{q_i}(d)=-1$. Applying Lemma \ref{lemma 2} we obtain that $4\mid a_n(f)$, so
$$\sum_{D\mid d}(-1)^{\omega(D)}a_n\left(f^{(D)}\right)\equiv 0\text{ (mod }2^{m+\epsilon}),$$
which proves the desired result.
\end{proof}

\begin{proof}[Proof of Corollary \ref{corollary 2main}]
By Remark \ref{remark rank} $\rank_\Z(E(\Q))\leq 2$ when $E$ is equal to $y^2=x^3-px$ or $y^2=x^3-p^3x$ for $p$ a prime number. Each of these elliptic curve is the quadratic twist by $p$ of the other, whose conductor is $2^5p^2$ if $p\equiv 1\text{ (mod }4)$ or $2^6p^2$ if $p\equiv -1\text{ (mod }4)$ (see \cite[Table I]{hadano1975conductor}). 

Furthermore, Theorem \ref{2main theorem} says that $3\leq \nu_2(\delta_E)$ in both cases, which yields the desired result.
\end{proof}

Let us remark that the lower bound for the $2$-adic valuation of the congruence number by Theorem \ref{2main theorem}
$2\left\lfloor\frac{\omega(d)+1}{2}\right\rfloor+1$ exceeds half of the upper bound for the Mordell-Weil rank $2\omega(d)-\delta_2(d)$, as noted in Remark \ref{remark rank}. 

Using data from the LMFDB \cite{LMFDB} for $d=21$ and $n\leq 100$, we have $\sum_{D\mid d}(-1)^{\omega(D)}a_n(f^{(D)})\equiv 16 \text{ (mod }32)$. On the other hand, Theorem \ref{2main theorem} implies $\delta(E)\geq 3$ in this scenario. This observation suggests that the actual congruence between $f$ and $h$ may not manifest at as high a power of $2$ as predicted by this theorem.

\end{document}